\def\subsection{\@startsection{subsection}{1}%
  \z@{.5\linespacing\@plus.7\linespacing}{.3\linespacing}%
  {\normalfont\scshape}}
\theoremstyle{plain}
\newtheorem{theorem}{Theorem}
\newtheorem{corollary}[theorem]{Corollary}
\newtheorem{lemma}[theorem]{Lemma}
\newtheorem{proposition}[theorem]{Proposition}
\theoremstyle{definition}
\newcommand{\N}{\mathbb{N}}
\renewcommand{\P}{\mathbb{P}}
\renewcommand{\SS}{\mathcal{S}}
\newcommand{\E}{\mathbb{E}}
\newcommand{\F}{\mathcal{F}}
\newcommand{\G}{\mathcal{G}}
\newcommand{\T}{\mathcal{T}}
\newcommand{\hyp}{\mathbf{H}}
\newcommand{\1}{\mathbbm{1}}
\DeclareMathOperator{\Var}{Var}
\DeclareMathOperator{\Cov}{Cov}
\let\oldmarginpar\marginpar
\renewcommand\marginpar[1]{\-\oldmarginpar[\raggedleft\footnotesize #1]%
{\raggedright\footnotesize #1}}
\definecolor{mypurple}{rgb}{.3,0,.5}
\begin{document}

\title[]%
{Optimal On-Line Selection
of an \\ Alternating Subsequence: \\
A Central limit theorem}
\author[]
{Alessandro Arlotto and J. Michael Steele}

\thanks{A. Arlotto:  The Fuqua School of Business, Duke University, Durham, NC, 27708.
Email address: \texttt{alessandro.arlotto@duke.edu}}

\thanks{J. M.
Steele: The Wharton School, Department of Statistics, Huntsman Hall
447, University of Pennsylvania, Philadelphia, PA 19104.
Email address: \texttt{steele@wharton.upenn.edu}}

\begin{abstract}

    We analyze the optimal policy for the sequential selection of an alternating subsequence from a sequence of $n$
    independent observations from a continuous distribution $F$, and we prove a central limit theorem for the number of selections
    made by that policy. The proof exploits the backward recursion of dynamic programming and assembles a detailed understanding of
    the associated value functions and selection rules.

    \medskip

    \noindent{\sc Key Words}: Bellman equation, on-line selection, Markov decision problem,
    dynamic programming, alternating subsequence,
    central limit theorem, non-homogeneous Markov chains.

    \medskip

    \noindent{\sc AMS Subject Classification (2010)}: Primary: 60C05, 60G40, 90C40; Secondary:  60F05, 90C27, 90C39
\end{abstract}

\date{first version: December 6, 2012; this version: June 9, 2013}

\maketitle


\section{Introduction}

In the problem of on-line selection of an alternating subsequence,
a decision maker observes a sequence of independent random variables $\{X_1, X_2, \ldots, X_n\}$
with common continuous distribution $F$, and the task is to select a subsequence
such that
$$
X_{\theta_1} < X_{\theta_2} > X_{\theta_3} < \cdots \gtrless X_{\theta_k}
$$
where the indices $1 \leq \theta_1 < \theta_2 < \theta_3 < \cdots < \theta_k \leq n$ are stopping times with respect to the $\sigma$-fields $\F_i = \sigma\{X_1, X_2, \ldots, X_i \}$, $1 \leq i \leq
n$. In other words, at time $i$ when the random variable $X_i$ is first observed, the decision maker has to choose to accept $X_i$ as a member of the alternating sequence that is under construction,
or choose to reject $X_i$ from any further consideration.

We call such a sequence of stopping times a \emph{feasible policy}, and we denote the set of all such policies
by $\Pi$. For any $\pi \in \Pi$, we then let $A^o_n(\pi)$ denote the number of selections made by $\pi$ for the realization
$\{X_1,X_2, \ldots, X_n\}$, i.e.
$$
A^o_n(\pi) = \max\left\{ k : X_{\theta_1} < X_{\theta_2} >  \cdots \gtrless X_{\theta_k} \text{ and }
1 \leq \theta_1 < \theta_2 < \cdots < \theta_k \leq n\right\}.
$$
It was found in \citeasnoun{ArlCheSheSte:JAP2011} that for each $n$ there is a unique
policy $\pi^*_n \in \Pi$ such that
$$
\E[A^o_n(\pi^*_n)] = \sup_{\pi \in \Pi} \E[A^o_n(\pi)],
$$
and it was proved that optimal mean  $\E[A^o_n(\pi^*_n)]$ can be tightly estimated. Specifically, one has
\begin{equation}\label{eq:MeanAsymptotics}
   \E[A^o_n(\pi^*_n)]  =  (2 - \sqrt{2}) n + O(1).
\end{equation}
Here, our main goal is to show that $A^o_n(\pi^*_n)$ satisfies a central limit theorem.

\begin{theorem}[Central Limit Theorem for Optimal On-line Alternating Selection]\label{th:AlternatingCLT}
There is a constant $0<\sigma^2 < \infty$ such that
$$
\frac{A^o_n(\pi^*_n) - ( 2 - \sqrt{2}) n}{\sqrt{n}} \Longrightarrow N(0,\sigma^2) \quad \quad \text{as } n \rightarrow \infty.
$$
\end{theorem}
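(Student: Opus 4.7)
The plan is to express $A^o_n(\pi^*_n)$ as an additive functional of the (non-homogeneous) Markov chain induced by the optimal policy and to apply a martingale central limit theorem to its Doob decomposition. By the probability integral transform I may assume $X_i \sim U[0,1]$. Write $v_k(x, d)$ for the optimal expected number of future selections when $k$ observations remain, the last selected value is $x$, and $d \in \{+,-\}$ encodes the direction of the next required inequality. The backward recursion
\[
v_k(x, d) = \E\bigl[\max\{v_{k-1}(x, d),\, 1 + v_{k-1}(X, -d)\,\1\{d(X-x)>0\}\}\bigr]
\]
shows that the optimal policy is a threshold rule: at step $i$ from state $(x, d)$, one accepts $X_i$ exactly when $d(X_i - \tau_{n-i}(x, d))>0$, for thresholds $\tau_k(x,d)$ read off from the $v_k$. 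The mean estimate \eqref{eq:MeanAsymptotics} already suggests an expansion $v_k(x,d) = (2-\sqrt 2)\, k + u(x,d) + o(1)$ and hence a stabilization $\tau_k \to \tau_\infty$ in the bulk.

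Let $S_i = (V_i, D_i)$ record the last selected value and the direction still to be satisfied after step $i$; under $\pi^*_n$ the process $(S_i)_{0 \le i \le n}$ is a non-homogeneous Markov chain on $[0,1]\times\{+,-\}$. With $\xi_i = \1\{X_i \text{ is selected}\}$ and $\F_i = \sigma(X_1,\dots,X_i)$, set $M_i = \xi_i - \E[\xi_i \mid \F_{i-1}]$, so $(M_i)$ is a bounded martingale difference sequence with $|M_i| \le 1$ and
\[
A^o_n(\pi^*_n) - \E[A^o_n(\pi^*_n)] = \sum_{i=1}^n M_i.
\]
By the Hall--Heyde martingale CLT (Lindeberg's condition is immediate from $|M_i|\le 1$), Theorem~\ref{th:AlternatingCLT} reduces to showing
\[
\frac{1}{n}\sum_{i=1}^n \E[M_i^2 \mid \F_{i-1}] \longrightarrow \sigma^2 \quad \text{in probability, for some } 0<\sigma^2<\infty.
\]

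Verifying this conditional-variance limit is the main obstacle, because the transition kernel of $(S_i)$ at time $i$ depends on the remaining horizon through $\tau_{n-i}$. The plan is (i) to promote \eqref{eq:MeanAsymptotics} into a \emph{quantitative} statement that $\tau_k(x,d)$ converges uniformly to a limit $\tau_\infty(x,d)$ as $k\to\infty$, by iterating the Bellman recursion and isolating a fixed-point equation for the potential $u$; (ii) to show that the time-homogeneous chain driven by $\tau_\infty$ admits a unique invariant law $\mu$ with exponential mixing, via a Doeblin-type minorization (after any acceptance the density of the new state $V_i$ is uniformly bounded below on a set of positive $\mu$-measure); and (iii) to conclude that for $i$ in the bulk $[\epsilon n, (1-\epsilon)n]$ the one-step kernel at time $i$ differs from the limiting kernel by $o(1)$, so an asymptotic ergodic theorem yields convergence of the averaged conditional variance to a constant $\sigma^2$, while the $O(\epsilon n)$ boundary indices contribute only $o(n)$ and are harmless. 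Positivity $\sigma^2>0$ is then clear because $\tau_\infty(x,d)$ is strictly interior to $[0,1]$ on a set of positive $\mu$-measure, so the Doob increments $M_i$ are genuinely nondegenerate in stationarity. The technically demanding step will be the quantitative stabilization of $\tau_k$: the $O(1)$ remainder in the mean asymptotic must be dissected finely enough to yield an ergodic-type conclusion rather than just a first-order limit.
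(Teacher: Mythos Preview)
Your decomposition contains a genuine error. You claim
\[
A^o_n(\pi^*_n) - \E[A^o_n(\pi^*_n)] = \sum_{i=1}^n M_i,\qquad M_i=\xi_i-\E[\xi_i\mid\F_{i-1}],
\]
but this is false: the Doob decomposition gives $A^o_n=\sum_i M_i + P_n$ with predictable compensator
$P_n=\sum_{i=1}^n \E[\xi_i\mid\F_{i-1}]=\sum_{i=1}^n\bigl(1-g_{n-i+1}(Y_{i-1})\bigr)$,
and $P_n$ is genuinely random (it depends on the trajectory $Y_0,Y_1,\ldots$), so $P_n\neq\E[A^o_n]$ a.s. Consequently $P_n-\E[P_n]$ is itself an additive functional of the chain of order $\sqrt n$, and it contributes to the asymptotic variance. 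Even if your steps (i)--(iii) go through, the quantity $\tfrac1n\sum_i\E[M_i^2\mid\F_{i-1}]=\tfrac1n\sum_i g_{n-i+1}(Y_{i-1})\bigl(1-g_{n-i+1}(Y_{i-1})\bigr)$ converges to $\E_\gamma[g_\infty(Y)(1-g_\infty(Y))]$, which is only the ``diagonal'' part of the true $\sigma^2$; the covariance series in the paper's formula for $\sigma^2$ is missing. A martingale CLT for $\sum M_i$ therefore does not yield the theorem. To repair the martingale route you would need the Poisson-equation construction: find $h$ with $f-\bar f=h-Ph$ for the limiting kernel, so that $\sum(f(Z_i)-\bar f)$ becomes a martingale plus a bounded telescoping remainder. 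This is not the naive increment you wrote down, and carrying it out for the \emph{non-homogeneous} chain driven by $\pi^*_n$ requires additional work.

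Your infrastructure plan (i)--(iii) is in fact close to what the paper does: it proves quantitative uniform convergence $g_k\to g_\infty(y)=\max\{\xi,y\}$, establishes uniform ergodicity of the limiting homogeneous chain via an explicit one-step coupling, and obtains the CLT for $A^o_n(\pi_\infty)$ from standard Markov-chain theory (which correctly captures all covariance terms). The paper then transfers the CLT to $A^o_n(\pi^*_n)$ not by a bulk/boundary splitting of conditional variances, but by a direct $L^2$ estimate
$\bigl\|A^o_n(\pi^*_n)-A^o_n(\pi_\infty)-\E[A^o_n(\pi^*_n)-A^o_n(\pi_\infty)]\bigr\|_2=o(\sqrt n)$,
proved via a martingale-difference representation of the \emph{difference} of the two counts and a renewal-block argument that localizes any disagreement between the policies to a geometrically short window. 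This sidesteps the compensator issue entirely.
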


The exact value of  $\sigma^2$ is not known, but $\sigma^2$ has a representation as an infinite series
and Monte Carlo calculations\footnote{Numerical estimates are obtained discretizing the state
space with a grid size of $10^{-4}$ and performing $5 \cdot 10^5$ repetitions.
The standard error for the estimate of $\sigma^2$ is $6.19\times 10^{-4}$.} suggest that $\sigma^2 \sim 0.3096$. The
determination of a closed-form expression for $\sigma^2$ remains an open problem. It may even be a tractable problem, though it is unlikely to be easy.

\subsection*{Motivation: History and Connections}

The theory of alternating sequences has ancient roots. It began with
the investigations of Euler on alternating permutations, and, through a long evolution, it
has become an important part of combinatorial theory \citeaffixed{Sta:CM2010}{cf.}. The
probability theory of alternating sequences is much more recent, and its main problems fit into two basic
categories: problems of  global selection and problems of sequential  selection.

In a problem of global selection (or an \emph{off-line} problem), one sees the whole sequence $\{X_1, X_2, \ldots, X_n\}$,
and the typical challenge is to understand the distribution of
length of the longest alternating subsequence under various probability models.
For example, when $\{X_1, X_2, \ldots, X_n\}$ is a random permutation of the integers $[1:n]$,
explicit bivariate generating functions were used by \citeasnoun{Wid:EJC2006}, Pemantle
\citeaffixed[p.~568]{Sta:PROC2007}{cf.}, and \citeasnoun{Sta:MMJ2008} to obtain central limit theorems.
Simpler probabilistic derivations of these results were then developed by  \citeasnoun{HouRes:EJC2010}
and \citeasnoun{Rom:DMTCS2011}. These authors exploited the close connection between the length of the longest alternating subsequence
and the number of local extrema of a sequence, a link that is also relevant to
local minima problems studied in computer science \citeaffixed{BanEpp:AACO2012}{e.g.}
and to similar structures in the theory of turning point tests (e.g., \citename{BroDav:SPRI2006}, \citeyear*{BroDav:SPRI2006}, p.~312, or
\citename{Hua:MIT2010}, \citeyear*{Hua:MIT2010}, Section 1.2).

The theory of on-line alternating subsequences is of more recent origin,
but it is closely tied to some classic themes of applied probability.
In the typical on-line decision problem, a decision maker considers $n$ random values in sequential order
and must decide whether to accept or reject each presented value at the time of its first presentation.
In the most famous such a problem, the decision maker gets to make only a single choice,
and his goal is to maximize the probability
that the selected value is the best out of all $n$ values.
\citeasnoun{Cay:ET1875} considered a problem of this kind, but the modern development of the theory began in earnest with
notable studies by \citeasnoun{Lin:APPLSTAT1961} and \citeasnoun{Dyn:SSSR1963}.
\citeasnoun{Sam:DEKKER1991} gives a thoughtful survey of the work on related problems through the 1980's, and connections to
more recent work are given by \citeasnoun{KriSamCah:AAP2009}, \citeasnoun{BucJaiSin:LNCS2010},
and \citeasnoun{BatHaiZad:SPRI2010}.

In more complex problems, the decision maker typically makes multiple sequential selections from the sequence of
presented values, and the objective is to maximize the expected number of selected elements, subject to
a combinatorial constraint.
For example, one can consider the optimal sequential
selection of a \emph{monotone} subsequence. This on-line selection
problem was introduced in \citeasnoun{SamSte:AP1981}, and
it has been analyzed more recently
in \citename{Gne:JAP1999} \citeyear{Gne:JAP1999,Gne:CPC2000,Gne:IMS2000},
\citeasnoun{BarGne:AAP2000}, \citeasnoun{BruDel:SPA2001} and \citeasnoun{ArlSte:CPC2011}.

The present investigation is particularly motivated by \citeasnoun{BruDel:SPA2004}, where a central limit theorem
is proved for the sequential selection of a monotone subsequence when the number $N$ of values offered to the
decision maker is a Poisson random variable that is independent of the sequence of the offered values. The methodology
of \citeasnoun{BruDel:SPA2004} is tightly bound with the theory of Markov processes and Dynkin's formula, while the present method
leans heavily on the Bellman equation and explicit estimates of the decision functions.

\subsection*{Organization of the Analysis}

The proof of Theorem \ref{th:AlternatingCLT} rests on a sustained investigation of the value functions
that are determined by the Bellman equation for the alternating selection problem.
The optimal policy $\pi^*_n$ is determined in turn by the
time-dependent threshold functions $\{g_n, g_{n-1}, \ldots, g_1\}$ that tell us when to
accept or reject a newly presented value.
Inferences from the Bellman equation almost inevitably require inductive arguments, and the
numerical calculations summarized in Figure \ref{fig:thresholds} are a great help in
framing appropriate induction hypotheses.

In Section \ref{se:optimal-policy}, we frame the selection problem as a dynamic program,
and we summarize a few results from earlier work. The main observation is that, by symmetry, one
can transform the natural
Bellman equation into an equivalent recursion that is much simpler. We also note that
the value functions determined by the reduced recursion have a useful
technical feature, which we call the property of \emph{diminishing returns}.

Sections \ref{se:geometry-value-function} through \ref{se:optimal-policy-at-infty} develop the geometry of the value and
threshold functions. Even though the alternating subsequence problem
is rather special, there are generic elements to its analysis, and our intention is to make these elements as visible as possible.
Roughly speaking, one frames concrete hypotheses based on the suggestions of Figure \ref{fig:thresholds} (or its analog),
and one proves these hypotheses by inductions that are driven by the Bellman equation.
While the specific inferences are unique to the problem of alternating selections, there is still some robustness to the
pattern of the proof.

Sections \ref{se:A-infty} and \ref{se:CLT-optimal-A} exploit the geometrical characterization of the threshold functions
to obtain information about the distribution of $A^o_n(\pi^*_n)$, the number of selections made by the optimal policy for
the problem with time horizon $n$.
The main step here is the introduction of a horizon-independent policy $\pi_\infty$ that
is determined by the limit of the threshold functions that define $\pi^*_n$. It is
relatively easy to check that the number of selections $A^o_n(\pi_\infty)$
made by this policy is a Markov additive functional of a
stationary, uniformly ergodic, Markov chain. Given this observation,
one can use off-the-shelf results to confirm that the
central limit theorem holds for $A^o_n(\pi_\infty)$, provided that one shows that the variance of
$A^o_n(\pi_\infty)$ is not $o(n)$.
We then complete the proof of Theorem \ref{th:AlternatingCLT} by showing that there is a coupling under which
$A^o_n(\pi^*_n)$ and $A^o_n(\pi_\infty)$
are close in $L^2$; specifically, we show that one has
$\parallel A^o_n(\pi^*_n)-A^o_n(\pi_\infty) - \E\left[ A^o_n(\pi^*_n)-A^o_n(\pi_\infty) \right] \parallel_2=o(\sqrt{n})$.

\section{Dynamic Programming Formulation}\label{se:optimal-policy}

We first note that since the distribution $F$ is continuous and since the problem is unchanged
if we replace $X_i$ by $U_i=F^{-1}(X_i)$, we can assume without
loss of generality that the $X_i$'s are uniformly distributed on $[0,1]$. The main task now is to exploit
the symmetries of the problem to obtain a tractable version of the Bellman equation.

We proceed recursively, and, for $1\leq i \leq n$, we let $S_i$ denote the value of the last member of the subsequence
selected up to and including time $i$. We also set $R_i=0$ if $S_i$ is a local minimum of
$\{S_0,S_1,\ldots,S_i\}$, and we set $R_i=1$ if $S_i$ is a local
maximum. Finally, to initialize our process, we set $S_0 = 1$ and $R_0 = 1$,
and we note that the process $\{(S_i, R_i): 0\leq i \leq n\}$ is Markov.

At time $i$, the decision to accept or reject the new observation $X_i$ depends only on two quantities: (1) the state of the selection
process before the presentation of  $X_i$; this is represented by
the pair $(S_{i-1},R_{i-1})$
and (2) the number of observations $k$ that were yet to be seen
before the presentation of $X_i$, i.e. $k = n-i+1$.

One can now characterize the optimal policy $\pi^*_n$ through
these state variables and an associated dynamic programming equation (or Bellman
equation) for the value function.
We let $v_{k}(s,r)$ denote the expected number of optimal alternating selections when
the number of observations yet to be seen is $k$, and the state of the selection process is given
by the pair $(s,r)$.
If $k=0$, then we set $v_0(s,r) \equiv 0$ for all $(s,r) \in [0,1]\times\{0,1\}$.
Otherwise, for  $ 1 \leq i \leq n$, states $S_{i-1}=s$ and $R_{i-1} = r$, and residual time $k = n-i+1$, we have
the Bellman equation
\begin{equation*}
v_{k}(s,r)=
\left\{
 \begin{array}{ll}
 \!\!sv_{k-1}(s,0) +\int_{s}^1 \max\left\{v_{k-1}(s,0),1+v_{k-1}(x,1)\right\}\,dx
 & \hbox{if $r=0$} \\
 \!\!(1-s)v_{k-1}(s,1) +\int_0^{s} \max\left\{v_{k-1}(s,1),1+v_{k-1}(x,0)\right\}\,dx
 & \hbox{if $r=1$}.\\
 \end{array}
 \right.
\end{equation*}

To see why this equation holds, first consider the case when $r=0$ (so the next selection needs to be a local maximum).
With probability $s$, we are presented with a value, $X_{i}$, that is less than the
previously selected value. In this case, we do not have the opportunity to make a selection, we reduce the
number of observations yet to be seen to $k-1$,
and this contributes the term $s v_{k-1}(s,0)$ to our equation.

Next, consider the case when $r=0$ but $s < X_{i} \leq 1$. In this case, one must decide to select $X_{i}=x$, or to reject it.
If we do not select the value $X_{i}=x$, then the expected number of subsequent selections equals $ v_{k-1}(s,0)$.
If we do select  $X_{i}=x$, then we
account for the selection of $x$ plus the expected number of optimal subsequent selections, which together equal $1 + v_{k-1}(x,1)$.
Since $X_{i}$ is uniformly distributed in $[s,1]$ the expected optimal contribution
is given by the second term of our Bellman equation (top line).
The proof of the second line of the Bellman equation is completely
analogous.

One benefit of indexing the value functions $v_{k}(\cdot,\cdot)$ by
the ``time-to-go'' parameter $k$
is that, by the optimality principle of dynamic
programming, the selection problem for a sequence of size $n$ embeds automatically into the selection problem for a sequence of size
$n+1$. As a consequence, we can consider the infinite sequence of value functions $\{v_{k}(\cdot,\cdot),1 \leq k < \infty \}$.
It is also useful to observe that these
value functions
satisfy an intuitive\footnote{A formal proof of \eqref{eq:symmetry} is given in \citeasnoun[Lemma 3]{ArlCheSheSte:JAP2011}.}
symmetry property:
\begin{equation}\label{eq:symmetry}
v_k(s,0) = v_k(1-s, 1) \quad \quad \text{for all } 1 \leq k < \infty \text{ and all } s \in [0,1],
\end{equation}
so we can define the \emph{single-variable value function} $v_k(y)$, $1 \leq k < \infty$, by setting
$$
v_k(y) \equiv v_k(y,0) = v_k(1-y, 1) \quad \quad \text{for all } 1 \leq k < \infty \text{ and all } y \in [0,1].
$$
Now, when we replace the bivariate value function $v_{k}(\cdot,\cdot)$ in the earlier Bellman equation
with the corresponding value of the univariate value function $v_k(\cdot)$,
we obtain a much nicer recursion:
\begin{equation}\label{eq:Bellman-FINITE-flipped}
v_{k}(y)  = y \, v_{k-1}(y) +\int_{y}^1 \max\left\{ v_{k-1}(y),1+ v_{k-1}(1 - x)\right\}\,dx.
\end{equation}
Here, we have  that $v_0(y)\equiv 0$ for all $y \in [0,1]$,
and we note that the map $y \mapsto v_k(y)$ is continuous and differentiable on $[0,1]$,
and it satisfies the boundary condition $v_k(1) = 0$ for all $ 1 \leq k < \infty$.
In this reduced setting,
the state of the selection process is simply given by
the value $y$, rather than the pair $(s,r)$.

The key benefit of the reduced
Bellman equation \eqref{eq:Bellman-FINITE-flipped} is that it leads to a simple rule for
the optimal acceptance or rejection of a newly presented observation.
Specifically, if we set
\begin{equation}\label{eq:g-FINITE-star}
g_{k}(y) = \inf \{x \in [y, 1]:~ v_{k-1}(y) \leq 1+ v_{k-1}(1 - x) \},
\end{equation}
then a value $x$ is an optimal selection if and only if $g_k(y) \leq x$.
For $1 \leq k < \infty$, we then call the function $g_k : [0,1] \rightarrow [0,1]$
the \emph{optimal threshold function} if it satisfies the variational characterization
given by \eqref{eq:g-FINITE-star}.
We will see shortly that the value function $y \mapsto v_{k}(y)$ is
strictly decreasing on $[0,1]$, a fact that will imply that
$g_k(y)$ is uniquely determined for each $y \in [0,1]$.

The optimal threshold functions $\{g_n, g_{n-1}, \ldots, g_1\}$
give us a useful representation  for the number
$A^o_n(\pi^*_n)$ of selections made by the optimal policy $\pi^*_n$.
Specifically, if we set $Y_0 \equiv 0$ and define the sequence $Y_1, Y_2, \ldots$,
by the recursion
\begin{equation*}
Y_i =
\begin{cases}
    Y_{i-1} & \text{if $X_i< g_{n-i+1}(Y_{i-1})$}\\
    1 - X_{i} & \text{if $X_i\geq g_{n-i+1}(Y_{i-1})$},
\end{cases}
\end{equation*}
then we have that
\begin{equation}\label{eq:A-optimal}
A^o_{n}(\pi^*_n) = \sum_{i=1}^{n } \1\left( X_i \geq g_{n-i+1}(Y_{i-1}) \right),
\end{equation}
and, moreover, by the principle of optimality of dynamic programming, we have
$$
\E[A^o_n(\pi^*_n)] = v_n(0) \quad \quad \text{for each } n \geq 1.
$$

The representation \eqref{eq:A-optimal} also tells us that
$A^o_{n}(\pi^*_n)$ is a sum of functions of a time inhomogeneous Markov chain.
The analysis of this inhomogeneous additive functional calls for a reasonably  detailed
understanding of both the threshold functions $\{g_k(\cdot): 1 \leq k < \infty\}$, and the value functions $\{v_k(\cdot): 1 \leq k < \infty\}$.

A technical fact that will be needed shortly is that, for each $1 \leq k < \infty$, the value function $v_k(\cdot)$
satisfies the bound
\begin{equation}\label{eq:diminishing-return}
v_{k-1}(u) - v_{k-1}(1-y) \leq v_{k}(u) - v_{k}(1-y)  \text{ for all } y\in [0,1/2] \, \text{and } u\in [y, 1-y].
\end{equation}
This bound reflects a restricted \emph{principle of diminishing returns};
a proof of \eqref{eq:diminishing-return} is given by \citeasnoun[Lemma 4]{ArlCheSheSte:JAP2011}.

Given the dynamic programming formulation provided here,
the results in this paper can be read independently of our earlier work.
Still, for the purpose of comparison,
we should note that the notation used here simplifies our earlier one in some significant ways.
For example, we now take $k$  to be number of observations \emph{yet to be seen}, and this gives us
the pleasing formulation \eqref{eq:Bellman-FINITE-flipped} of the Bellman equation.
We also write $g_k(y)$ for the optimal threshold function when there are $k$ observations yet to be seen, and this replaces the earlier,
more cumbersome, notation $f^*_{n-k+1,n}(y)$.

\section{Geometry of the Value and Threshold Functions}\label{se:geometry-value-function}

Figure \ref{fig:thresholds} gives a highly suggestive picture of the individual
threshold functions $g_k(\cdot),$ and it foretells much of the story about how they behave as $k \rightarrow \infty$.
Analytical confirmation of these suggestions is the central challenge.
The path to understanding the threshold functions goes through the value functions, and
we begin by proving the very plausible fact that
the value functions are strictly decreasing.

\begin{figure}[t]
    \centering
    \caption{Threshold functions $g_{k}(y)$, $1\leq k \leq 10$, and their limit as $k \rightarrow \infty$ for $y \in [0, 0.35]$.}

    \floatfoot{Solid lines plot the threshold functions $g_{k}$, $1\leq k \leq 10$,
    for values of $y$ in the range $[0, 0.35]$.
    We have $g_1(y) = g_2(y)=y$ for all $y \in [0,1]$, and
    the piecewise smooth graphs of $g_3$, $g_4$, and $g_5$ are explicitly labeled
    with their index placed just below the curve. For $k=6,7,...,10$ the $g_k$  are indicated without labels.
    Each $g_{k}$ the meets the diagonal line at some point, and one has
    $g_{k}(y)=y$ on the rest of the interval $[0,1]$.  The plot suggests most of the analytical properties of the
    sequence $\{g_k: 1 \leq k <\infty\}$ that are needed for the proof of the central limit theorem. In particular,
    for each fixed $y \in [0,1]$  the sequence $k \mapsto g_k(y)$ is monotone non-decreasing.
    The dashed line represents the limit of $g_{k}(y)$ as $k \rightarrow \infty$; this limit is piecewise linear.
    } \label{fig:thresholds}
    \vspace{8pt}
    {
    \ifpdf
        {
        \includegraphics[angle=0,width=0.80\linewidth]{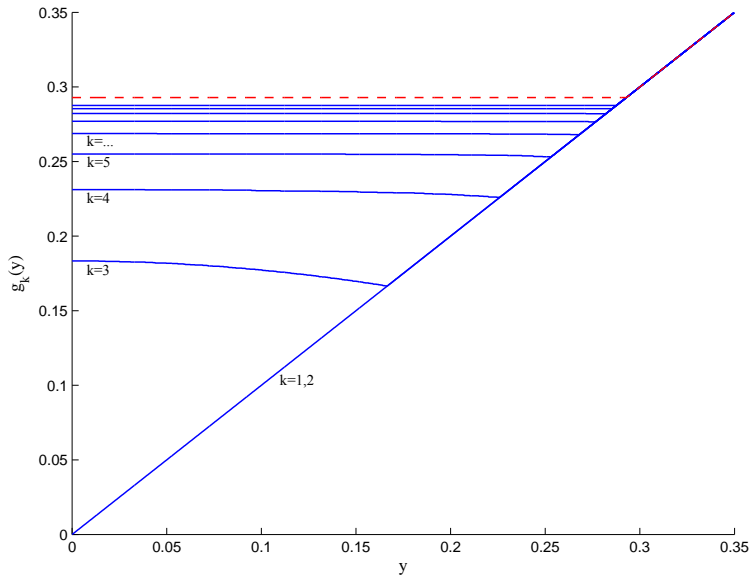}
        }
    \else
        {
        \includegraphics[angle=0,width=0.80\linewidth]{thresholds5.eps}
        }
    \fi
    }
\end{figure}

\begin{lemma}[Strict Monotonicity of the Value Functions]\label{lm:v-strictly-decreasing}
For each $1 \leq k < \infty$, the value function $y \mapsto v_k(y)$ defined by the
Bellman recursion \eqref{eq:Bellman-FINITE-flipped} is strictly decreasing on $[0,1]$.
\end{lemma}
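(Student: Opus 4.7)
The plan is to induct on the time-to-go parameter $k$, working directly from the reduced Bellman recursion \eqref{eq:Bellman-FINITE-flipped} and exploiting the fact that $a \mapsto \max\{a,b\}$ is non-decreasing. The base case $k = 1$ is immediate: since $v_0 \equiv 0$, the recursion yields $v_1(y) = \int_y^1 1\,dx = 1 - y$, which is strictly decreasing on $[0,1]$.

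For the inductive step, assume $v_{k-1}$ is strictly decreasing, and fix $0 \leq y < z \leq 1$. Writing $G_k(w) = \int_w^1 \max\{v_{k-1}(w), 1 + v_{k-1}(1-x)\}\,dx$, the recursion reads $v_k(w) = w\,v_{k-1}(w) + G_k(w)$, and a routine rearrangement gives
\begin{equation*}
v_k(y) - v_k(z) = y\bigl(v_{k-1}(y) - v_{k-1}(z)\bigr) - (z-y)\,v_{k-1}(z) + G_k(y) - G_k(z).
\end{equation*}

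The heart of the argument is a lower bound on $G_k(y) - G_k(z)$, which I would obtain by splitting the integral defining $G_k(y)$ at $x = z$. On $[y,z]$ the crude bound $\max\{v_{k-1}(y), 1+v_{k-1}(1-x)\} \geq v_{k-1}(y)$ contributes at least $(z-y)\,v_{k-1}(y)$. On $[z,1]$, the monotonicity of $\max\{\cdot,h\}$ in its first argument, combined with the inductive hypothesis $v_{k-1}(y) > v_{k-1}(z)$, makes the integrand of $G_k(y)$ pointwise at least the integrand of $G_k(z)$, so that piece contributes a nonnegative amount. Substituting $G_k(y) - G_k(z) \geq (z-y)\,v_{k-1}(y)$ into the display above and collecting terms, the cross-terms rearrange into
\begin{equation*}
v_k(y) - v_k(z) \geq z\bigl(v_{k-1}(y) - v_{k-1}(z)\bigr) > 0,
\end{equation*}
where the strict inequality uses $z > 0$ (forced by $0 \leq y < z$) together with the inductive hypothesis. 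This completes the induction.

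The main obstacle is really just identifying the correct decomposition: once one splits at $x = z$ and uses only the one-sided monotonicity of $\max$, the bookkeeping collapses to the clean identity above. A differentiation approach based on the envelope-style identity $v_k'(y) = g_k(y)\,v_{k-1}'(y)$ is tempting, but it requires a case analysis at the threshold and degenerates where $g_k(y) = y$ (as already happens for $k = 1$), so the direct integral estimate above seems the cleaner route.
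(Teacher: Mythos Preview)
Your proof is correct and essentially identical to the paper's own argument: both split the integral at the larger point, use the monotonicity of $a \mapsto \max\{a,b\}$ together with the induction hypothesis on the common interval, and arrive at the same bound $v_k(y)-v_k(z)\geq z\bigl(v_{k-1}(y)-v_{k-1}(z)\bigr)$ (the paper writes it as $v_k(y+\epsilon)-v_k(y)\leq (y+\epsilon)\{v_{k-1}(y+\epsilon)-v_{k-1}(y)\}$). Your presentation with the auxiliary $G_k$ is slightly more explicit, but the content is the same.
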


This assertion is certainly intuitive and one may not feel any need for a proof.
Nevertheless, there is something
to be gained from a formal proof; specifically, one sees in a simple context how the Bellman equation can be used to propagate
a sequence of induction hypotheses.

\begin{proof}[Proof of Lemma \ref{lm:v-strictly-decreasing}]
We consider the sequence of hypothesis:
\begin{equation*}
\hyp_k: \quad v_k( y + \epsilon) < v_k (y) \quad \text{for all $y \in [0, 1)$ and all $\epsilon >0$ such that $y + \epsilon\leq 1$.}
\end{equation*}
Since $v_1(y) = 1-y$, $\hyp_1$ is true. For $k\geq 2$, we note by the Bellman recursion \eqref{eq:Bellman-FINITE-flipped}
that we have
\begin{align*}
v_k( y + \epsilon) - v_k (y)
& = (y+\epsilon) v_{k-1}( y + \epsilon) + \int_{y+\epsilon}^1 \!\!\!\! \max\{  v_{k-1}( y + \epsilon), 1 + v_{k-1}( 1-x ) \} \, dx \\
& \quad - y v_{k-1}( y ) - \int_{y}^1 \max\{ v_{k-1}( y ), 1 + v_{k-1}( 1-x ) \} \, dx\\
& \leq (y+\epsilon) v_{k-1}( y + \epsilon)  +  \int_{y+\epsilon}^1 \!\!\!\! \max\{  v_{k-1}( y ), 1 + v_{k-1}( 1-x ) \} \, dx \\
& \quad - (y + \epsilon) v_{k-1}( y ) - \int_{y +\epsilon}^1 \max\{ v_{k-1}( y ), 1 + v_{k-1}( 1-x ) \} \, dx\\
& = (y+\epsilon)\left\{ v_{k-1}( y + \epsilon)  - v_{k-1}( y ) \right\} < 0,
\end{align*}
where the first inequality of the chain follows from
$$ \epsilon \, v_{k-1}( y ) \leq \int_y^{y+\epsilon} \max\{ v_{k-1}( y ), 1 + v_{k-1}( 1-x ) \} \, dx$$
and the second inequality follows from $\hyp_{k-1}$. This completes the proof of $\hyp_k$ and of the lemma.
\end{proof}

Figure \ref{fig:thresholds} further suggests that
the threshold functions have a long interval of fixed points; the next lemma partially confirms this.

\begin{lemma}[Range of Fixed Points]\label{lm:v-initial-bound-improved}
For all $k \geq 1$ and $y \in [0,1]$, we have
\begin{equation}\label{eq:v-initial-bound-improved}
v_k(y) - v_k( 2/3 ) \leq v_k(0) - v_k( 2/3 ) \leq 1.
\end{equation}
In particular, for all $k \geq 1$, we have
\begin{equation}\label{eq:1/3}
g_k(y) = y \quad \text{ for all } y \in [1/3, 1]
\end{equation}
and
\begin{equation}\label{eq:1/3-upper-bound}
g_k(y) \leq 1/3 \quad \text{ for all } y \in [0, 1/3].
\end{equation}
\end{lemma}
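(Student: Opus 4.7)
The first inequality in \eqref{eq:v-initial-bound-improved} is immediate from the strict monotonicity of $v_k$ established in Lemma \ref{lm:v-strictly-decreasing}. The plan for the other three assertions -- the bound $v_k(0)-v_k(2/3)\leq 1$, together with \eqref{eq:1/3} and \eqref{eq:1/3-upper-bound} -- is a single induction on $k$ that propagates all three simultaneously. The base case $k=1$ is handled by the explicit identity $v_1(y)=1-y$, which gives $v_1(0)-v_1(2/3)=2/3$ and $g_1(y)=y$ on $[0,1]$.

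For the inductive step, assume the three assertions at stage $k-1$; I would first derive \eqref{eq:1/3} and \eqref{eq:1/3-upper-bound} at stage $k$ directly from the variational definition \eqref{eq:g-FINITE-star} and Lemma \ref{lm:v-strictly-decreasing}. When $y\geq 1/2$ the inequality $v_{k-1}(y)\leq 1+v_{k-1}(1-y)$ is automatic; when $y\in[1/3,1/2]$, the monotonicity bounds $v_{k-1}(y)\leq v_{k-1}(0)$ and $v_{k-1}(1-y)\geq v_{k-1}(2/3)$ combine with the inductive hypothesis $v_{k-1}(0)-v_{k-1}(2/3)\leq 1$ to give the same inequality, producing \eqref{eq:1/3}. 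For \eqref{eq:1/3-upper-bound} with $y\leq 1/3$, testing $x=1/3$ in \eqref{eq:g-FINITE-star} produces the required condition by exactly the same reasoning.

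The core of the plan is to prove $v_k(0)-v_k(2/3)\leq 1$. The just-established \eqref{eq:1/3} at $y=2/3$ gives $g_k(2/3)=2/3$, and the just-established \eqref{eq:1/3-upper-bound} at $y=0$ gives $\alpha:=g_k(0)\leq 1/3$. The Bellman recursion \eqref{eq:Bellman-FINITE-flipped} together with the change of variable $u=1-x$ then produces the closed-form expressions
\[
v_k(2/3)=\tfrac{2}{3}v_{k-1}(2/3)+\tfrac{1}{3}+\int_0^{1/3}v_{k-1}(u)\,du
\]
and
\[
v_k(0)=\alpha\, v_{k-1}(0)+(1-\alpha)+\int_0^{1-\alpha}v_{k-1}(u)\,du.
\]
Subtracting these, splitting $\int_{1/3}^{1-\alpha}=\int_{1/3}^{2/3}+\int_{2/3}^{1-\alpha}$, and bounding $v_{k-1}$ on each subinterval by its value at the left endpoint (using Lemma \ref{lm:v-strictly-decreasing} once more), routine bookkeeping collapses the result to
\[
v_k(0)-v_k(2/3)\leq\bigl(\alpha+\tfrac{1}{3}\bigr)\bigl(v_{k-1}(0)-v_{k-1}(2/3)\bigr)+\bigl(\tfrac{2}{3}-\alpha\bigr).
\]
Since $(\alpha+\tfrac{1}{3})+(\tfrac{2}{3}-\alpha)=1$, the inductive hypothesis immediately closes the induction.

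The hard step is the last one: cruder estimates -- for instance, comparing $\max\{v_{k-1}(0),\,\cdot\,\}$ with $\max\{v_{k-1}(2/3),\,\cdot\,\}$ across all of $[0,1]$ -- produce a recursion whose multiplier exceeds $1$ and fail to propagate the bound. The splitting of the integration range at $1/3$ and $2/3$ is dictated by the stage-$k$ thresholds just obtained, and the bootstrap succeeds precisely because the resulting multiplier $\alpha+\tfrac{1}{3}$ and additive offset $\tfrac{2}{3}-\alpha$ sum to exactly $1$.
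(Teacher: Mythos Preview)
Your argument is correct. The underlying estimates---splitting the range at $1/3$ and $2/3$ and using the induction hypothesis $v_{k-1}(0)-v_{k-1}(2/3)\le 1$ together with monotonicity of $v_{k-1}$---are the same as in the paper, but the organization is genuinely different and worth noting. The paper runs the induction on the bound $v_k(0)-v_k(2/3)\le 1$ alone: it subtracts the two Bellman recursions \emph{before} resolving the maxima, obtaining integrands $I_1$ and $I_2$ that it analyzes case by case, and then reads off \eqref{eq:1/3} and \eqref{eq:1/3-upper-bound} as corollaries at the very end. You instead bootstrap the threshold facts for $g_k$ first (legitimately, since $g_k$ depends only on $v_{k-1}$), use them to resolve the maxima up front, and arrive at the explicit contraction-type inequality
\[
v_k(0)-v_k(2/3)\le(\alpha+\tfrac13)\bigl(v_{k-1}(0)-v_{k-1}(2/3)\bigr)+(\tfrac23-\alpha),
\]
whose coefficients sum to $1$. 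Your route makes the self-improving mechanism visibly transparent and explains \emph{why} the constant $2/3$ is the right pivot; the paper's route avoids introducing $\alpha=g_k(0)$ at all and is slightly quicker to write down. Either way the content is the same.
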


\begin{proof}
The first inequality of \eqref{eq:v-initial-bound-improved} is trivial since the map $y \mapsto v_k(y)$ is strictly decreasing in $y$.
Also, the identities \eqref{eq:1/3} and \eqref{eq:1/3-upper-bound} are immediate from the  variational
characterization \eqref{eq:g-FINITE-star} and the bound \eqref{eq:v-initial-bound-improved}.

The real task is to prove the second inequality of \eqref{eq:v-initial-bound-improved}.
This time we use induction on the hypotheses given by
\begin{equation}\label{eq:induction-difference-0-2/3}
\hyp_k: \quad \quad v_k(0) - v_k( 2/3 ) \leq 1, \quad\quad \text{for } 1\leq k < \infty.
\end{equation}
As before $v_1(y)=1-y$, so $\hyp_1$ is trivially true. Now, when we apply the
Bellman recursion \eqref{eq:Bellman-FINITE-flipped} with $y = 0$ and $y = 2/3$, we get
\begin{eqnarray*}
    v_k (0) - v_k (2/3) & = & \int_0^1 \max\left\{ v_{k-1}(0), 1 + v_{k-1}(1-u)\right\} \, du \\
                        & -  & (2/3) v_{k-1}(2/3) - \int_{2/3}^1  \max\left\{ v_{k-1}(2/3), 1 + v_{k-1}(1-u)\right\} \, du,
\end{eqnarray*}
from which a change of variable gives
\begin{equation} \label{eq:difference-0-2/3}
v_k (0) - v_k (2/3) = \int_0^{1/3} I_1(u) \, du  + \int_{1/3}^1 I_2(u) \, du
\end{equation}
where $I_1(u)$ and $I_2(u)$ are defined by
$$
I_1(u) \equiv \max\left\{ v_{k-1}(0), 1 + v_{k-1}(u)\right\} -  \max\left\{ v_{k-1}(2/3), 1 + v_{k-1}(u)\right\}
$$
and
$$
I_2(u) \equiv \max\left\{ v_{k-1}(0) - v_{k-1}(2/3), 1 + v_{k-1}(u) - v_{k-1}(2/3) \right\}.
$$

For the first integrand, $I_1(u)$, we note that
\begin{align}\label{eq:first-integrandI1}
I_1(u) =  & \max\left\{ v_{k-1}(0) - v_{k-1}(2/3), 1 + v_{k-1}(u) - v_{k-1}(2/3)\right\} \\
          & -  \max\left\{ 0, 1 + v_{k-1}(u) - v_{k-1}(2/3)\right\}. \nonumber
\end{align}
The induction assumption $\hyp_{k-1}$ then tells us that
$$
 v_{k-1}(0)  - v_{k-1}(2/3) \leq 1,
$$
and the strict monotonicity of the value function $v_{k-1}(\cdot)$ on $[0,1]$ yields
\begin{equation*}
1 \leq  1 + v_{k-1}(u) - v_{k-1}(2/3) \quad \text{ for all } u \in [0, 1/3].
\end{equation*}
Thus, both the first and the second addend in \eqref{eq:first-integrandI1} equal the right maximand
and
\begin{equation}\label{eq:I1zero}
I_1(u)  = 0 \quad \text{ for all } u \in [0, 1/3],
\end{equation}
so the first integral in \eqref{eq:difference-0-2/3} vanishes.

To estimate $I_2(u)$, we note that $\hyp_{k-1}$ and
the monotonicity of $y \mapsto v_{k-1}(y)$ tell us that
\begin{enumerate}[(i)]
    \item if $u \in [1/3, 2/3]$, then
        $$
        I_2(u) = 1 + v_{k-1}(u) - v_{k-1}(2/3) \leq  1 + v_{k-1}(0) - v_{k-1}(2/3) \leq 2 \, \, \text{and}
        $$
    \item if $u \in [2/3, 1]$, then
        $$
        I_2(u) = \max\left\{ v_{k-1}(0) - v_{k-1}(2/3), 1 + v_{k-1}(u) - v_{k-1}(2/3) \right\} \leq 1.
        $$
\end{enumerate}
Now we just calculate
$$
 v_k (0) - v_k (2/3) = \int_{1/3}^{1} I_2(u) \, du \leq \int_{1/3}^{2/3} 2 \, du + \int_{2/3}^{1} 1 \, du = 1,
$$
and thus we  complete the proof of \eqref{eq:v-initial-bound-improved}.
\end{proof}

From Lemma \ref{lm:v-initial-bound-improved}, we know that a threshold function $g_k$ has many fixed points; in particular,
$g_k(y)=y$ if $y \in [1/3,1]$. Figure \ref{fig:thresholds} further suggests that much of the
geometry of $g_k$ is governed by its \emph{minimal} fixed point:
\begin{equation}\label{eq:xi_k}
\xi_k \equiv \inf\{y: g_k(y) = y\}.
\end{equation}

The value $\xi_k$ also has a useful policy interpretation. If the value $y$ of the last observation selected  is bigger than
$\xi_k$, then the decision maker follows a  greedy policy; he  accepts \emph{any} feasible arriving observation.
On the other hand, if $y < \xi_k$, the decision maker acts conservatively; his choices are governed by
the value of the threshold $g_k(y)$. Finally, if $y = \xi_k$, the greedy policy and the optimal policy agree.
This interpretation of $\xi_k$ is formalized in the next lemma, where we also prove that the
sequence $\{\xi_k: \, k = 1,2,\ldots\}$ is non-decreasing.

\begin{lemma}[Characterization of the Minimal Fixed Point]\label{lm:xi_k}
For $k \geq 3$, the minimal fixed point $\xi_k \equiv \inf \{y: g_k(y) = y\} $ is the unique solution to the equation
$$
v_{k-1}(y) - v_{k-1}(1-y) = 1.
$$
Moreover, the minimal fixed points form a non-decreasing sequence, so we have
\begin{equation}\label{eg:xi-monotone}
\xi_k \leq \xi_{k+1} \quad \text{ for all } \quad  k \geq 1.
\end{equation}
\end{lemma}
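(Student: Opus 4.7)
My plan is to exploit the strict monotonicity of the value functions (Lemma~\ref{lm:v-strictly-decreasing}) together with the diminishing returns inequality~\eqref{eq:diminishing-return}. It will be convenient to work with the auxiliary function
$h_k(y) := v_k(y) - v_k(1-y)$,
which by Lemma~\ref{lm:v-strictly-decreasing} is continuous and strictly decreasing on $[0,1]$, with $h_k(0) = v_k(0)$ and $h_k(1/2) = 0$.

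For the characterization, I would first check that $v_{k-1}(0) > 1$ whenever $k \geq 3$. The Bellman recursion~\eqref{eq:Bellman-FINITE-flipped} immediately yields $v_k(y) \geq v_{k-1}(y)$ (drop the ``$1 + v_{k-1}(1-x)$'' branch inside the maximum), while a one-line computation from $v_1(y) = 1-y$ gives $v_2(0) = 3/2$. Consequently $h_{k-1}(0) > 1 > 0 = h_{k-1}(1/2)$, so by the intermediate value theorem there is a unique $y^\ast \in (0, 1/2)$ with $h_{k-1}(y^\ast) = 1$. To identify $y^\ast$ with $\xi_k$, I would invoke the variational definition~\eqref{eq:g-FINITE-star}: the condition $g_k(y) = y$ is equivalent to $v_{k-1}(y) \leq 1 + v_{k-1}(1-y)$, i.e., $h_{k-1}(y) \leq 1$; because $h_{k-1}$ is strictly decreasing, the set of fixed points of $g_k$ is exactly $[y^\ast, 1]$, so $\xi_k = y^\ast$ is the unique solution of $v_{k-1}(y) - v_{k-1}(1-y) = 1$.

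For the monotonicity statement, the cases $k = 1, 2$ reduce to direct inspection of~\eqref{eq:g-FINITE-star} using $v_0 \equiv 0$ and $v_1(y) = 1-y$, which forces $g_1 \equiv g_2 \equiv \mathrm{id}$ on $[0,1]$ and hence $\xi_1 = \xi_2 = 0$. For $k \geq 3$, the range-of-fixed-points bound~\eqref{eq:1/3-upper-bound} in Lemma~\ref{lm:v-initial-bound-improved} gives $\xi_k \leq 1/3 \leq 1/2$, which permits applying the diminishing returns inequality~\eqref{eq:diminishing-return} at $u = y = \xi_k$ to conclude $h_{k-1}(\xi_k) \leq h_k(\xi_k)$. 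Since the left-hand side equals $1$ and $h_k$ is strictly decreasing with $h_k(\xi_{k+1}) = 1$, this forces $\xi_k \leq \xi_{k+1}$.

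The main obstacle, really the only delicate point, is securing the strict inequality $v_{k-1}(0) > 1$ for $k \geq 3$, which is what guarantees $\xi_k$ sits in the open interior $(0, 1/2)$ so that the minimal fixed point genuinely coincides with the solution of the stated equation rather than collapsing to $0$. Once that is in hand, everything else is a clean chain of implications from strict monotonicity of the value functions, the variational characterization of $g_k$, the $1/3$-bound, and the diminishing returns property.
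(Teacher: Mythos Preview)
Your proposal is correct and follows essentially the same route as the paper's proof: define the auxiliary difference function (the paper calls it $\delta_k(y)=v_{k-1}(y)-v_{k-1}(1-y)$, your $h_{k-1}$), use strict monotonicity of $v_{k-1}$ to see it is continuous and strictly decreasing, check $v_{k-1}(0)\geq v_2(0)=3/2>1$ to locate a unique root, identify that root with $\xi_k$ via the equivalence $g_k(y)=y\Leftrightarrow h_{k-1}(y)\leq 1$, and then obtain $\xi_k\leq\xi_{k+1}$ from the diminishing-returns inequality \eqref{eq:diminishing-return} with $u=y$. Two small remarks: the label you cite for $\xi_k\leq 1/3$ should be \eqref{eq:1/3} rather than \eqref{eq:1/3-upper-bound}, and in fact you don't even need Lemma~\ref{lm:v-initial-bound-improved} there, since your own IVT step already places $\xi_k\in(0,1/2)$, which is all that \eqref{eq:diminishing-return} requires.
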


\begin{proof}
From the variational characterization of $g_k(\cdot)$, we have
$$
g_{k}(y) = \inf \{x \in [y, 1]:~ v_{k-1}(y) \leq 1+ v_{k-1}(1 - x) \},
$$
so if we set $\delta_k ( y ) \equiv v_{k-1}(y) - v_{k-1}(1-y)$,  then we have
\begin{equation}\label{eq:when-g-equals-y}
g_k(y) = y \quad \text{ if and only if} \quad \delta_k ( y ) \leq 1.
\end{equation}
The Bellman equation \eqref{eq:Bellman-FINITE-flipped} for $v_k(\cdot)$ and Lemma \ref{lm:v-strictly-decreasing}
tell us that the map $y \mapsto v_{k-1}(y)$ is continuous and strictly  decreasing
with $v_1(y) = 1-y$ and $v_2(y) = (3/2)(1-y^2)$.
Then, the function $\delta_k$ is continuous and strictly decreasing, and
for $k \geq 3$ we have
$ \delta_k(0) = v_{k-1}(0) \geq v_2(0) = 3/2 > 1$,
and
$
\delta_k(1) = - v_{k-1}(0)  < 0,
$
so, there is a unique value $y^*$ such that
$$
\delta_k ( y^* ) \equiv v_{k-1}(y^*) - v_{k-1}(1-y^*) = 1.
$$
Since the map $y \mapsto \delta_k ( y )$ is strictly decreasing, we can also write $y^*$ as
$$
y^* = \inf\{ y : v_{k-1}(y) - v_{k-1}(1-y) \leq 1 \} = \inf\{y: g_k(y) = y\} = \xi_k,
$$
where the second equality follows from \eqref{eq:when-g-equals-y} and the third equality comes from the definition
of $\xi_k$.

To prove the monotonicity property $\xi_k \leq \xi_{k+1}$ for all $k \geq 1$,
we first note that since $v_0(y) \equiv 0$ and $v_1(y) \equiv 1-y$, we have that $\xi_1 = \xi_2 = 0$.
Also, by Lemma \ref{lm:v-initial-bound-improved} we have for $k \geq 3$ that there is always a value
$0\leq y \leq 1/3$ such that  $g_k(y) = y$ so
\begin{align}
\xi_k  & = \inf\{y \in [0, 1/3]: g_k(y) = y\} \nonumber\\
       & = \inf\{ y \in [0, 1/3]: \delta_k(y) \equiv v_{k-1}(y) - v_{k-1}(1-y) \leq 1 \} \nonumber\\
       & \leq \inf\{ y \in [0, 1/3]: \delta_{k+1}(y) \equiv v_{k}(y) - v_{k}(1-y) \leq 1 \} \label{eq:xi-use-restricted-sm}\\
       & = \inf\{y \in [0, 1/3]: g_{k+1}(y) = y \}
       = \xi_{k+1},\nonumber
\end{align}
where the one inequality \eqref{eq:xi-use-restricted-sm} follows from the diminishing return property \eqref{eq:diminishing-return}.
\end{proof}

\section{A Second Property of Diminishing Returns}\label{se:DiminishingReturns2}

The value functions have a second property of diminishing returns that provides some  crucial help. Specifically,
we need it to show that the threshold functions $g_k(\cdot)$ increase with $1\leq k
<\infty$. This monotonicity moves us a long way toward an exhaustive
understanding of the asymptotic behavior of the threshold functions.

\begin{proposition}[Second Property of Diminishing Returns]\label{pr:DiminishingReturns2}
For all $k \geq 3$, the value functions defined by the Bellman recursion \eqref{eq:Bellman-FINITE-flipped}
satisfy the bound
\begin{equation}\label{eq:DiminishingReturns2}
v_{k-1}(y) - v_{k-1}(1-x) \leq v_{k}(y) - v_{k}(1-x) \text{ for all $ y \leq \xi_k$ and $x \in [y,  g_k(y)]$.}
\end{equation}
\end{proposition}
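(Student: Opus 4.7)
The plan is to proceed by induction on $k \geq 3$, with the base case $k=3$ handled by explicit computation using $v_1(y) = 1-y$, $v_2(y) = \tfrac{3}{2}(1-y^2)$, and the resulting closed form $g_3(y) = 1 - \sqrt{2/3 + y^2}$ on $[0, \xi_3] = [0, 1/6]$, from which $v_3$ is computed via \eqref{eq:Bellman-FINITE-flipped} and the inequality verified directly.

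For the inductive step, set $h_k(z) := v_k(z) - v_{k-1}(z)$ and $\Psi(x) := h_k(y) - h_k(1-x)$, so that \eqref{eq:DiminishingReturns2} is equivalent to $\Psi(x) \geq 0$ on $[y, g_k(y)]$. A first simplification is a monotonicity observation: the first property \eqref{eq:diminishing-return} forces $h_k$ to be non-increasing on $[1/2, 1]$, because for $1/2 \leq z_1 \leq z_2 \leq 1$ the choice $y' = 1-z_2 \in [0,1/2]$ and $u = z_1 \in [y', 1-y']$ in \eqref{eq:diminishing-return} yields $h_k(z_1) \geq h_k(z_2)$. Since $1-x \in [2/3, 1-y] \subset [1/2,1]$ throughout the range of interest, the map $x \mapsto h_k(1-x)$ is non-decreasing, so $\Psi$ is non-increasing on $[y, g_k(y)]$. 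Its minimum is attained at $x = g_k(y)$, so it suffices to prove $\Psi(g_k(y)) \geq 0$.

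For $y = \xi_k$ this is immediate from \eqref{eq:diminishing-return}, since $g_k(\xi_k) = \xi_k$. For $y < \xi_k$, the variational characterization \eqref{eq:g-FINITE-star} together with continuity of $v_{k-1}$ gives the first-order condition $v_{k-1}(y) = 1 + v_{k-1}(1-g_k(y))$. Substituting this together with the greedy Bellman formula for $v_k(1-g_k(y))$ (valid since $1-g_k(y) \geq 2/3 > \xi_k$) into the Bellman formula \eqref{eq:Bellman-FINITE-flipped} for $v_k(y)$ and simplifying produces the identity
\begin{equation*}
\Psi(g_k(y)) \;=\; \int_{g_k(y)}^{1-g_k(y)} \bigl[v_{k-1}(s) - v_{k-1}(1-g_k(y))\bigr]\, ds \;-\; g_k(y).
\end{equation*}
Hence the claim reduces to the integral inequality
\begin{equation*}
\int_{g_k(y)}^{1-g_k(y)} \bigl[v_{k-1}(s) - v_{k-1}(1-g_k(y))\bigr]\, ds \;\geq\; g_k(y).
\end{equation*}

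The main obstacle is this integral inequality. The plan is to expand $v_{k-1}(s)$ via its own Bellman recursion (using the threshold $g_{k-1}(s)$) and to combine the inductive hypothesis (second property at level $k-1$), the first property \eqref{eq:diminishing-return}, and the strict monotonicity from Lemma~\ref{lm:v-strictly-decreasing} to bound the integrand from below. The delicate point is that $g_k(y)$ and $g_{k-1}(y)$ need not coincide---indeed, the monotonicity $g_{k-1}(y) \leq g_k(y)$ is one of the intended consequences of this very proposition---so the argument must proceed without assuming this ordering and must track contributions from the two first-order anchors $v_{k-1}(y) = 1 + v_{k-1}(1-g_k(y))$ and $v_{k-2}(y) = 1 + v_{k-2}(1-g_{k-1}(y))$ to control the boundary terms and avoid circularity.
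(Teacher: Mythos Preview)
Your reduction is correct and, in one place, genuinely nice: the first diminishing-returns property \eqref{eq:diminishing-return} does force $h_k=v_k-v_{k-1}$ to be non-increasing on $[1/2,1]$, and this legitimately collapses \eqref{eq:DiminishingReturns2} to its endpoint instance $x=g_k(y)$. Your integral identity for $\Psi(g_k(y))$ also checks out. But observe what the reduction actually says. Since $v_{k-1}(y)-v_{k-1}(1-g_k(y))=1$ by the first-order condition, the inequality $\Psi(g_k(y))\geq 0$ is literally $v_k(y)-v_k(1-g_k(y))\geq 1$, which by \eqref{eq:g-FINITE-star} is precisely $g_k(y)\leq g_{k+1}(y)$. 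So you have reduced the proposition at level $k$ to threshold monotonicity at level $k$ --- which is exactly the content of Corollary~\ref{cor:MonotoneThresholds}, derived in the paper \emph{from} the proposition. Your integral inequality is just a rewriting of this, and by itself does not break the circularity you flag.

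The genuine gap is the inductive step, which you do not carry out. Your integral inequality involves only $v_{k-1}$, while the induction hypothesis relates $v_{k-2}$ to $v_{k-1}$ on the restricted domain $y'\leq\xi_{k-1}$, $x'\in[y',g_{k-1}(y')]$; there is no evident mechanism by which ``expanding $v_{k-1}(s)$ via its own Bellman recursion'' converts that hypothesis into a lower bound on the integrand, and you also leave the range $y\in(\xi_{k-1},\xi_k]$ unaddressed. The paper's proof avoids this impasse by \emph{not} reducing to the endpoint. It applies the Bellman recursion directly to the two-variable difference $v_k(y)-v_k(1-x)$ (equation \eqref{eq:integral-monotonicity-proof}), after first \emph{extending} the induction hypothesis from the range $x\in[y,g_k(y)]$ to the larger range $x\in[y,g_{k+1}(y)]$ (inequality \eqref{eq:induction-sm2-extended}); this extension is what allows the hypothesis, together with \eqref{eq:diminishing-return}, to act termwise inside the Bellman integral and push the index forward. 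The interval $y\in[\xi_k,\xi_{k+1}]$ is then handled by a separate, short argument. Your monotonicity observation might streamline the exposition, but it does not substitute for this Bellman-driven step.
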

\begin{proof}
We again use induction to exploit the Bellman equation,
and this time the sequence of hypotheses is given by
$$
\hyp_{k}:  v_{k-1}(y) - v_{k-1}(1-x) \leq v_{k}(y) - v_{k}(1-x),  \text{ for all $ y \leq \xi_k$ and $x \in [y, g_k(y)]$.}
$$
We first prove $\hyp_3$, which we then use as the base case for our induction.
We recall that $v_1(y) = 1-y$ and, if we use the Bellman recursion \eqref{eq:Bellman-FINITE-flipped},
we obtain that $v_2(y) = (3/2)(1-y^2)$.
In turn, this implies $g_3(y) = \max\{ 1 - \sqrt{ 2/3 + y^2}, \, y\}$ and $\xi_3 = 1/6$.
To calculate $v_3 (y)$ we apply the Bellman recursion one more time, and we obtain a messier but still tractable
formula:
$$
v_3 (y) =   \begin{cases}
                 (3/2)(1 - y^2) + 3^{-3/2} (2 + 3y^2)^{3/2}         & \quad \text{if } y \leq 1/6 \\
                 (1/2)( 1 - y ) ( 4+ 5y + 2y^2)                              & \quad \text{if } y \geq  1/6.
            \end{cases}
$$
Thus, for $y \leq \xi_3 = 1/6$, we need to show
$$
v_{2}(y) - v_{2}(1-x) \leq v_{3}(y) - v_{3}(1-x)  \quad \text{for all $x \in [y, g_3(y)]$,}
$$
where $g_3 (y) = 1 - \sqrt{ 2/3 + y^2}$.
From our explicit formulas for $v_2(\cdot)$ and $v_3(\cdot)$, we have
$$
v_3(1-x) - v_2(1-x) = (5/2) x - 3 x^2 + x^3,
$$
and
$$
v_3(y) - v_2(y) =  3^{-3/2}(2 + 3y^2)^{3/2} \geq  \left( 2/3 \right)^{3/2} \approx 0.5443.
$$
Calculus shows that $(5 /2) x - 3 x^2 + x^3$ increases on $0 \leq x \leq  1 - \sqrt{2/3}$
and attains an endpoint maximum of  $(1/18) \left(9-\sqrt{6}\right) \approx 0.3640$.
Thus, we find
$$
v_3(1-x) - v_2(1-x) \leq (1/18)  (9-\sqrt{6}) < \left( 2/3 \right)^{3/2} \leq v_3(y) - v_2(y)
$$
for all $ y \leq 1/6$ and $ y \leq x \leq 1 - \sqrt{2/3 + y^2}$, completing the proof of $\hyp_{3}$.

We now suppose that $\hyp_k$ holds, and we seek to show $\hyp_{k+1}$.
First, from the variational characterization of $g_k(\cdot)$ and the definition of $\xi_k$, recall that
$$
1 \leq v_{k-1}(y) - v_{k-1}(1-x) \quad \text{ for } y \leq \xi_k \text{ and } x \in [y, g_k(y)],
$$
which, together with the induction assumption $\hyp_k$, implies
\begin{equation}\label{eq:hyp-k-explicit}
1 \leq v_{k-1}(y) - v_{k-1}(1-x) \leq v_{k}(y) - v_{k}(1-x) \quad  \text{ for } y \leq \xi_k \text{ and } x \in [y, g_k(y)].
\end{equation}
The second inequality in \eqref{eq:hyp-k-explicit} and
the variational characterization \eqref{eq:g-FINITE-star} give us
$$
g_k(y) \leq g_{k+1}(y) \quad \quad \text{ for all  } y  \leq \xi_k.
$$
Moreover, if $x \in [g_k(y), g_{k+1}(y)]$ the variational characterization of $g_{k+1}(\cdot)$ also gives
$$
    v_{k-1}(y) - v_{k-1}(1-x) \leq  1 \leq v_{k}(y) - v_{k}(1-x) \quad \text{ for $y \leq \xi_k$ and $x \in [g_k(y), g_{k+1}(y)]$,}
$$
which combines with \eqref{eq:hyp-k-explicit} to give the crucial inequality
\begin{equation}\label{eq:induction-sm2-extended}
    v_{k-1}(y) - v_{k-1}(1-x) \leq v_{k}(y) - v_{k}(1-x) \quad \text{ for $y \leq \xi_k$ and $x \in [y, g_{k+1}(y)]$}.
\end{equation}

From an application of the Bellman recursion \eqref{eq:Bellman-FINITE-flipped}
for $y \leq \xi_k$ and $x \in [y, g_{k+1}(y)]$, we obtain
\begin{align}\label{eq:integral-monotonicity-proof}
v_{k}(y) & - v_{k}(1-x) =  y \left( v_{k-1}(y) - v_{k-1}(1-x) \right) \nonumber\\
                        & + \int_y^{1-x} \max\left\{ v_{k-1}(y) - v_{k-1}(1-x) , 1 + v_{k-1}(1-u) - v_{k-1}(1-x) \right\} \, du.
\end{align}
If we now change variable in the last integral by replacing $u$ with $1-u$,
then the range of integration changes to $[x, 1-y]$ and
we can rewrite \eqref{eq:integral-monotonicity-proof} as
\begin{align*}
v_{k}(y) & - v_{k}(1-x) =  y \left( v_{k-1}(y) - v_{k-1}(1-x) \right) \\
                        & + \int_x^{1-x} \max\left\{ v_{k-1}(y) - v_{k-1}(1-x) , 1 + v_{k-1}(u) - v_{k-1}(1-x) \right\} \, du \nonumber\\
                        & + \int_{1-x}^{1-y} \max\left\{ v_{k-1}(y) - v_{k-1}(1-x) , 1 + v_{k-1}(u) - v_{k-1}(1-x) \right\} \, du. \nonumber
\end{align*}
In this last equation, we see that we can use our crucial inequality \eqref{eq:induction-sm2-extended}
to bound the first addend and the left maximand of the other two addends.
Moreover, since $ x \leq g_{k+1}(y) \leq 1/3$, we can appeal to the diminishing return property \eqref{eq:diminishing-return}
to bound the right maximand of the second addend. In doing so, we obtain
\begin{align} \label{eq:key-DiminishingReturns2}
v_{k}(y) & - v_{k}(1-x) \leq  y \left( v_{k}(y) - v_{k}(1-x) \right) \\
                        & + \int_x^{1-x} \max\left\{ v_{k}(y) - v_{k}(1-x) , 1 + v_{k}(u) - v_{k}(1-x) \right\} \, du \nonumber\\
                        & + \int_{1-x}^{1-y} \max\left\{ v_{k}(y) - v_{k}(1-x) , 1 + v_{k-1}(u) - v_{k-1}(1-x) \right\} \, du. \nonumber
\end{align}
We now observe that the monotonicity property of the map $u \mapsto v_{k-1}(u)$ for $u \in [1-x, 1-y]$
and the variational characterization of $g_{k+1}(\cdot)$ combine
to give
$$
1 + v_{k-1}(u) - v_{k-1}(1-x)  \leq 1 \leq v_{k}(y) - v_{k}(1-x)
$$
for all $y \leq \xi_k$  and  $x \in [y, g_{k+1}(y)]$.
Hence, the third integrand in \eqref{eq:key-DiminishingReturns2} satisfies the equality
$$
\max\left\{ v_{k}(y) - v_{k}(1-x) , 1 + v_{k-1}(u) - v_{k-1}(1-x) \right\} = v_{k}(y) - v_{k}(1-x),
$$
and an analogous monotonicity argument for $u \in [1-x, 1-y]$ also yields
$$
\max\left\{ v_{k}(y) - v_{k}(1-x) , 1 + v_{k}(u) - v_{k}(1-x) \right\} = v_{k}(y) - v_{k}(1-x).
$$
When we use the last two observations in \eqref{eq:key-DiminishingReturns2}
we obtain that
\begin{equation*}\label{eq:y-smaller-xik}
v_{k}(y) - v_{k}(1-x) \leq v_{k+1}(y) - v_{k+1}(1-x),  \text{ for all } y \leq \xi_k \text{ and } x \in [y, g_{k+1}(y)].
\end{equation*}

We now conclude our argument by considering values $y \in [\xi_k, \xi_{k+1}]$.
From the variational characterization of $g_{k+1}(\cdot)$ and the definition of $\xi_k$, we obtain
$$
v_{k-1}(y) - v_{k-1}(1-x) \leq 1 \leq v_{k}(y) - v_{k}(1-x) \quad \text{for $y \in [\xi_k, \xi_{k+1}]$ and $x \in [y, g_{k+1}(y)]$}
$$
which can be used instead of \eqref{eq:induction-sm2-extended} to construct
an argument similar to the earlier one and conclude that
$$
v_{k}(y) - v_{k}(1-x) \leq v_{k+1}(y) - v_{k+1}(1-x),  \text{ for } y \in[ \xi_k, \xi_{k+1}] \text{ and } x \in [y, g_{k+1}(y)],
$$
just as needed to complete the proof of \eqref{eq:DiminishingReturns2}.
\end{proof}

The usefulness of the property of diminishing returns in Proposition \ref{pr:DiminishingReturns2}
shows itself simply --- but clearly --- in the following corollary.

\begin{corollary}[Monotonicity of Optimal Thresholds]\label{cor:MonotoneThresholds}
For all  $y \in [0,1]$,  the threshold functions satisfy
\begin{equation}\label{eq:monotonicity-gk-inLemma}
g_k(y) \leq g_{k+1}(y) \quad \quad \text{ for all } k \geq 1, \text{and }
\end{equation}
\begin{equation}\label{eq:lower-bound-gk-inLemma}
1/6 \leq g_k(y) \quad \quad  \text{for all } k \geq 3.
\end{equation}
\end{corollary}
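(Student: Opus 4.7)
The plan is to extract both assertions directly from Proposition \ref{pr:DiminishingReturns2} via a short case analysis; there is no single step that forms a real obstacle, so the work is essentially bookkeeping.

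For the monotonicity \eqref{eq:monotonicity-gk-inLemma}, I treat $k = 1, 2$ separately. The initial conditions $v_0 \equiv 0$ and $v_1(y) = 1-y$ together with \eqref{eq:g-FINITE-star} give $g_1(y) = g_2(y) = y$, and since the infimum in \eqref{eq:g-FINITE-star} is taken over $x \in [y,1]$ one always has $g_3(y) \geq y$, so $g_1 \leq g_2 \leq g_3$. For $k \geq 3$ I split at the minimal fixed point $\xi_k$ from \eqref{eq:xi_k}. If $y \leq \xi_k$, then Proposition \ref{pr:DiminishingReturns2} supplies $v_{k-1}(y) - v_{k-1}(1-x) \leq v_k(y) - v_k(1-x)$ for $x \in [y, g_k(y)]$; evaluated at $x = g_k(y)$, where $v_{k-1}(y) - v_{k-1}(1 - g_k(y)) = 1$ by the definition \eqref{eq:g-FINITE-star} of $g_k$, this gives $v_k(y) - v_k(1 - g_k(y)) \geq 1$, and \eqref{eq:g-FINITE-star} then forces $g_{k+1}(y) \geq g_k(y)$. (This step is in fact already performed inside the proof of the proposition.) If instead $y > \xi_k$, then Lemma \ref{lm:xi_k} and \eqref{eq:when-g-equals-y} yield $g_k(y) = y$, and once again $g_{k+1}(y) \geq y$ is automatic from \eqref{eq:g-FINITE-star}.

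For the lower bound \eqref{eq:lower-bound-gk-inLemma}, the monotonicity just established reduces the task to showing $g_3(y) \geq 1/6$ for all $y \in [0,1]$. The base case $\hyp_3$ of the proof of Proposition \ref{pr:DiminishingReturns2} provides the closed form $g_3(y) = \max\{1 - \sqrt{2/3 + y^2},\, y\}$ together with $\xi_3 = 1/6$. For $y \geq 1/6$ one has $g_3(y) = y \geq 1/6$, and for $y \leq 1/6$ the bound $g_3(y) \geq 1 - \sqrt{2/3 + y^2} \geq 1 - \sqrt{2/3 + 1/36} = 1 - 5/6 = 1/6$ follows because $y \mapsto \sqrt{2/3 + y^2}$ is non-decreasing on $[0,1/6]$.
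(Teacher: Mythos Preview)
Your proof is correct and follows essentially the same route as the paper's: you handle $k=1,2$ explicitly, invoke the closed form $g_3(y)=\max\{1-\sqrt{2/3+y^2},\,y\}$ with $\xi_3=1/6$ for the base of \eqref{eq:lower-bound-gk-inLemma}, and for $k\geq 3$ derive \eqref{eq:monotonicity-gk-inLemma} from Proposition~\ref{pr:DiminishingReturns2} together with the variational characterization \eqref{eq:g-FINITE-star}. Your explicit split at $\xi_k$ (with the trivial case $y>\xi_k$ handled via $g_k(y)=y\leq g_{k+1}(y)$) just makes visible what the paper's one-line appeal to \eqref{eq:DiminishingReturns2} leaves implicit.
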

\begin{proof}
For $k=1,2$, we have $v_0(y) =0$ and $v_1(y) = 1 - y$, so that
$$
g_1(y) = g_2(y) = y.
$$
For $k = 3$, we have already noticed in the course of proving Proposition \ref{pr:DiminishingReturns2} that we have
$g_3(y) = \max\{ 1 - \sqrt{ 2/3 + y^2}\, ,\, y\}$,
so, in particular, $g_3(y) \geq 1/6$ for $y \in [0,1]$.
Finally, for $k > 3$, the bound \eqref{eq:DiminishingReturns2}
and the variational characterization \eqref{eq:g-FINITE-star} of
the threshold function give us \eqref{eq:monotonicity-gk-inLemma},
and this confirms the lower bound \eqref{eq:lower-bound-gk-inLemma}.
\end{proof}

We now pursue two further suggestions from Figure \ref{fig:thresholds}.
Specifically, we show that the limit function $g_\infty$ has exactly the piecewise linear shape that the figure suggests, and we also show
that the convergence to $g_\infty$ is uniform. The proof of these facts requires some additional regularity properties
that are discussed in the next section.

\section{Regularity of the Value and Threshold Functions}\label{se:first-derivatives}

The minimal fixed points give us a powerful guide to the geometry of the value function and its derivatives.
The connection begins with the Bellman recursion \eqref{eq:Bellman-FINITE-flipped} and the variational characterization
\eqref{eq:g-FINITE-star} which together give the identity
$$
v_k(y) = g_k(y) v_{k-1}(y) + \int_{g_k(y)}^1 \{1 + v_{k-1}(1-x) \} \, dx.
$$
If we now differentiate both sides with respect to $y$, we obtain the recursion for the
first derivative:
\begin{equation*}
v'_k(y) = g'_k(y) v_{k-1}(y) + g_k(y) v'_{k-1}(y) - g'_k(y) \left\{ 1 + v_{k-1}(1- g_k(y)) \right\}.
\end{equation*}
The definition of the minimal fixed point \eqref{eq:xi_k} and
the variational characterization \eqref{eq:g-FINITE-star} then give us
\begin{equation}\label{eq:v-of-g}
v_{k-1}(y)  =  1 + v_{k-1}(1- g_k(y)) \quad \quad \text{ if } y \leq \xi_k,
\end{equation}
so our recursion for $v'_k(\cdot)$ can be written more informatively as
\begin{equation}\label{eq:v-first-derivative}
v'_k(y) =   \begin{cases}
                g_k(y) v'_{k-1}(y)                                  & \quad \text{if $y \leq  \xi_k$} \\
                v_{k-1}(y)  - 1 - v_{k-1}(1- y) + y v'_{k-1}(y)     & \quad \text{if $y \geq  \xi_k$.}
            \end{cases}
\end{equation}
These relations underscore the importance of the minimal fixed points to the geometry of the value function,
and they also lead to useful regularity properties.

\begin{lemma}[Monotonicity Properties of the Derivatives]
For all $k\geq 1$, we have
\begin{align}
-1 \leq v'_k(y) &\leq v'_{k+1}(y) \leq 0\quad \quad \text{ for } y \in [0,1/3]  \label{eq:first-derivatives-up-1/3}
\text{ and }\\
v'_{k+1}(y) &\leq v'_{k}(y) \leq -1 \quad \quad \text{ for }\,\, y \in [1/2,1]  \label{eq:first-derivatives-down-1/2}.
\end{align}
\end{lemma}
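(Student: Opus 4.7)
My plan is to prove the two inequalities separately, each by induction on $k$, using the piecewise recursion \eqref{eq:v-first-derivative} for $v'_k$ together with the diminishing returns property \eqref{eq:diminishing-return}. The structural fact I rely on throughout is that $\xi_k \leq 1/3$ for every $k \geq 1$ (Lemma \ref{lm:v-initial-bound-improved}), so the interval $[1/2, 1]$ always sits in the greedy regime.

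For $y \in [1/2, 1]$, since $\xi_k \leq 1/3 < 1/2$, the second branch of \eqref{eq:v-first-derivative} always applies, giving
$$v'_k(y) = [v_{k-1}(y) - v_{k-1}(1-y)] - 1 + y v'_{k-1}(y).$$
The bound $v'_k(y) \leq -1$ then drops out without induction: the bracketed term is $\leq 0$ by Lemma \ref{lm:v-strictly-decreasing} and $y \geq 1-y$, while $y v'_{k-1}(y) \leq 0$ by the same lemma. For the $k$-monotonicity $v'_{k+1}(y) \leq v'_k(y)$, I subtract the recursions at consecutive indices to obtain
$$v'_{k+1}(y) - v'_k(y) = [\Delta_k(y) - \Delta_k(1-y)] + y[v'_k(y) - v'_{k-1}(y)], \qquad \Delta_k := v_k - v_{k-1}.$$
Applying \eqref{eq:diminishing-return} with $u = 1-y \in [0, 1/2]$ yields $\Delta_k(y) \leq \Delta_k(1-y)$, so the first bracket is $\leq 0$; the second is $\leq 0$ by induction. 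The base case is a direct computation from $v_1(y) = 1-y$ and $v_2(y) = (3/2)(1 - y^2)$.

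For $y \in [0, 1/3]$, the upper bound $v'_k \leq 0$ is Lemma \ref{lm:v-strictly-decreasing}, so I prove $v'_k \geq -1$ and $v'_k \leq v'_{k+1}$ jointly by induction. Because both $\xi_k \leq \xi_{k+1}$ lie in $[0, 1/3]$ (Lemma \ref{lm:xi_k}), I split the argument according to whether $y \leq \xi_{k+1}$ or $y \in [\xi_{k+1}, 1/3]$. On $[0, \xi_{k+1}]$ the first branch of \eqref{eq:v-first-derivative} gives $v'_{k+1}(y) = g_{k+1}(y) v'_k(y)$; since $g_{k+1}(y) \in [0, 1/3] \subset [0, 1]$ and $v'_k(y) \leq 0$, multiplication by $g_{k+1}(y)$ pulls $v'_k(y)$ toward zero, yielding both $v'_{k+1}(y) \geq v'_k(y)$ and $|v'_{k+1}(y)| \leq g_{k+1}(y) \leq 1/3$ outright. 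On $[\xi_{k+1}, 1/3]$ both derivatives come from the second branch, so the same difference identity as above holds; diminishing returns \eqref{eq:diminishing-return} at $u = y \in [0, 1/2]$ now gives $\Delta_k(y) \geq \Delta_k(1-y)$, and the inductive monotonicity of $v'_k$ delivers $v'_{k+1}(y) \geq v'_k(y) \geq -1$. Base cases $k = 1, 2$ are direct from $v'_1 \equiv -1$ and $v'_2(y) = -3y \in [-1, 0]$ on $[0, 1/3]$.

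The most delicate point is the case split on $[0, 1/3]$: because $\xi_k$ and $\xi_{k+1}$ need not coincide, the $(k+1)$-st derivative can be in the first branch while the $k$-th is in the second, so the two recursions look structurally different. The saving observation is that whenever $y \leq \xi_{k+1}$ the factor $g_{k+1}(y) \leq 1$ in the first-branch recursion automatically shrinks $|v'_k(y)|$, making both desired inequalities fall out without any interface condition between the branches.
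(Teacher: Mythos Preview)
Your proof is correct and follows essentially the same route as the paper's: both arguments split on the minimal fixed point, use the first branch of \eqref{eq:v-first-derivative} to get the monotonicity for free when $y$ lies below it, and use the second branch together with the diminishing-returns inequality \eqref{eq:diminishing-return} and induction to handle the rest. Your one genuine improvement is the observation that $v'_k(y)\le -1$ on $[1/2,1]$ follows \emph{directly} from the second-branch formula (the bracket $v_{k-1}(y)-v_{k-1}(1-y)$ and the term $y\,v'_{k-1}(y)$ are each nonpositive), whereas the paper derives this bound only after establishing the full monotone chain $v'_k\le v'_{k-1}\le\cdots\le v'_1=-1$. A small wording quibble: the claim ``$|v'_{k+1}(y)|\le g_{k+1}(y)\le 1/3$ outright'' on $[0,\xi_{k+1}]$ still uses the inductive hypothesis $|v'_k(y)|\le 1$, so it is not induction-free; but since you explicitly say you are proving the two bounds jointly by induction, this is harmless.
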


\begin{proof}
We already know from Lemma \ref{lm:v-strictly-decreasing} that $y \mapsto v_k(y)$ is strictly decreasing,
so $v'_k(y)$ is non-positive on $[0,1]$.
Since $0 \leq g_{k}(y) \leq 1$, the top line of \eqref{eq:v-first-derivative} tells us that
\begin{equation}\label{eq:first-derivative-bound-pre-xi}
v'_{k-1}(y) \leq g_{k}(y) v'_{k-1}(y) = v'_{k}(y)  \quad \text{for } y \leq \xi_{k}.
\end{equation}
To cover the rest of the range in  \eqref{eq:first-derivatives-up-1/3}, we use induction on the sequence of hypotheses
$$
\hyp_k: \quad\quad v'_{k-1}(y) \leq v'_{k}(y), \quad\quad \text{for all } y \in [\xi_{k}, 1/3] \text{ and } 2 \leq k < \infty.
$$
For the base case $\hyp_2$, we have $\xi_2 = 0$, $v_1(y) = 1-y$, and  $v_2(y) = (3/2) (1-y^2)$. So
$$
v'_1(y) = -1 \leq - 3 y = v'_2(y) \quad \text{ if and only if } \quad y \leq 1/3,
$$
just as needed. Now taking  $\hyp_k$ as our induction assumption, we seek to prove $\hyp_{k+1}$.

First, for $y \in [\xi_{k}, 1/3]$, the second line of \eqref{eq:v-first-derivative} gives us $v'_{k}(\cdot)$.
By the diminishing return property \eqref{eq:diminishing-return},
the monotonicity $\xi_k \leq \xi_{k+1}$, and the induction assumption $\hyp_{k}$, we see for $y \in [\xi_{k+1}, 1/3]$
that
\begin{align*}
v'_{k}(y) & = v_{k-1}(y)  - 1 - v_{k-1}(1- y) + y v'_{k-1}(y) \\
          & \leq v_{k}(y)  - 1 - v_{k}(1- y) + y v'_{k}(y)
          = v'_{k+1}(y),
\end{align*}
completing the proof $\hyp_{k+1}$.
To complete the proof of \eqref{eq:first-derivatives-up-1/3}, one just needs to note that
the lower bound $-1 \leq v'_k(y)$ now follows from
$v'_1(y) = -1$ together with \eqref{eq:first-derivative-bound-pre-xi} and $\hyp_k$.

To prove \eqref{eq:first-derivatives-down-1/2}, we again use induction, but this time the sequence of hypothesis is given by
$$
\hyp_k: \quad \quad v'_{k}(y) \leq v'_{k-1}(y) \quad \quad \text{for } y \in [1/2, 1], \text{ and } 2 \leq k < \infty.
$$
As before, $v_1(y) = 1-y$ and $v_2(y) = (3/2) (1-y^2)$ so $v'_1(y) = -1$ and  $v'_2(y) = -3y$.
For $y \geq 1/2$, we then have
$$
v'_2(y) \leq -3/2 \leq -1 = v'_1(y),
$$
proving $\hyp_2$. As tradition demands, we again take $\hyp_k$ as our induction assumption, and we seek to prove $\hyp_{k+1}$.

Since $y \in [1/2, 1]$, we have $1-y \leq 1/2 \leq y$, so the diminishing return
property \eqref{eq:diminishing-return} gives us
\begin{equation}\label{eq:tempRSM}
v_{k-1}(1- y) - v_{k-1}(y) \leq v_{k}(1- y) - v_{k}(y).
\end{equation}

Next, recall the identity of the bottom line of \eqref{eq:v-first-derivative}, but, as you do so, replace $k$ by $k+1$. We can
then directly apply
\eqref{eq:tempRSM} and $\hyp_{k}$ to get
\begin{align*}
v'_{k+1}(y) & = v_{k}(y)  - 1 - v_{k}(1- y) + y v'_{k}(y) \\
            & \leq v_{k-1}(y)  - 1 - v_{k-1}(1- y) + y v'_{k-1}(y)
            = v'_{k}(y).
\end{align*}
This inequality completes the proof of $\hyp_{k+1}$ and confirms the lower bound of \eqref{eq:first-derivatives-down-1/2}.
For the upper bound of \eqref{eq:first-derivatives-down-1/2}, $v'_k(y) \leq -1 $ on $[1/2, 1]$, we just need to note that
it follows from the fact $v'_1(y) = -1$ and the validity of $\hyp_k$ for all $k\geq 1$.
\end{proof}

The smoothness of the value functions converts easily into a very useful Lipschitz equi-continuity property
of the threshold functions.

\begin{lemma}[Lipschitz Equi-Continuity of Threshold Functions]\label{lm:lip-g}
For all $k\geq 1$, we have
\begin{equation}\label{eq:g-Lip-1}
|g_k(y) -g_k(z)| \leq |y-z| \quad \text{for all } y,z \in [0,1].
\end{equation}
\end{lemma}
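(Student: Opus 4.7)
The plan is to leverage the implicit equation \eqref{eq:v-of-g} together with the derivative bounds just established. I split the argument into three cases based on the location of $y$ and $z$ relative to the minimal fixed point $\xi_k$.

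For $k=1,2$ we have $g_1(y)=g_2(y)=y$, so \eqref{eq:g-Lip-1} is immediate; assume $k\geq 3$. For $y,z \in [\xi_k,1]$ the identity $g_k(y)=y$ gives the bound trivially, so the real work is the range $y,z \in [0,\xi_k]$. Here, by the definition of $\xi_k$ and the identity \eqref{eq:v-of-g}, each value $u \leq \xi_k$ satisfies
\begin{equation*}
v_{k-1}(u) \;=\; 1 + v_{k-1}(1 - g_k(u)),
\end{equation*}
and subtracting the equations for $y$ and $z$ and applying the mean value theorem to each side gives points $\eta_1$ between $y$ and $z$, and $\eta_2$ between $1-g_k(y)$ and $1-g_k(z)$, such that
\begin{equation*}
v'_{k-1}(\eta_1)\,(y-z) \;=\; -\,v'_{k-1}(\eta_2)\,\bigl(g_k(y) - g_k(z)\bigr).
\end{equation*}
Thus $|g_k(y) - g_k(z)| = |v'_{k-1}(\eta_1)|\,|v'_{k-1}(\eta_2)|^{-1}\,|y-z|$.

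Now I appeal to the monotonicity of derivatives. Since $y,z \in [0,\xi_k] \subseteq [0,1/3]$, the bound \eqref{eq:first-derivatives-up-1/3} yields $|v'_{k-1}(\eta_1)| \leq 1$. For the denominator, Lemma \ref{lm:v-initial-bound-improved} tells us $g_k(u) \leq 1/3$ for all $u \in [0,1/3]$, hence $1-g_k(u) \geq 2/3$, and therefore $\eta_2 \in [2/3,1] \subseteq [1/2,1]$. The bound \eqref{eq:first-derivatives-down-1/2} then gives $|v'_{k-1}(\eta_2)| \geq 1$. Combining, the ratio is at most $1$, and we obtain $|g_k(y)-g_k(z)| \leq |y-z|$ on $[0,\xi_k]$. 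Finally, for the mixed case $0 \leq z < \xi_k < y \leq 1$, I use continuity of $g_k$ at $\xi_k$ (note $g_k(\xi_k)=\xi_k$ both from the right, where $g_k$ is the identity, and from the left, by the implicit equation) and split via the triangle inequality $|g_k(y)-g_k(z)| \leq |g_k(y)-\xi_k| + |\xi_k - g_k(z)| \leq (y-\xi_k)+(\xi_k-z) = |y-z|$.

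The only mild subtlety is the denominator bound: one must check that the argument $1 - g_k(y)$ indeed falls in the regime $[1/2,1]$ where $|v'_{k-1}| \geq 1$, and this is precisely where the earlier upper bound $g_k(y) \leq 1/3$ from Lemma \ref{lm:v-initial-bound-improved} is crucial. The rest is bookkeeping, so I do not anticipate an obstacle.
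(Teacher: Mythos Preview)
Your proof is correct and follows essentially the same route as the paper: use the implicit relation \eqref{eq:v-of-g} on $[0,\xi_k]$, combine the derivative bounds \eqref{eq:first-derivatives-up-1/3} and \eqref{eq:first-derivatives-down-1/2} (the latter enabled by $g_k(y)\leq 1/3$), and finish the mixed case with the triangle inequality. The only cosmetic difference is that the paper differentiates \eqref{eq:v-of-g} to bound $|g_k'|$ pointwise, whereas you apply the mean value theorem to the finite differences directly --- same idea, same ingredients.
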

\begin{proof}
We first consider $y \in [0, \xi_k]$. In this case, we have that
identity \eqref{eq:v-of-g} holds, so, by its differentiation, we obtain
\begin{equation}\label{eq:first-derivative-g}
g'_k(y) = - \frac{ |v'_{k-1}(y)| }{ | v'_{k-1}( 1 - g_{k}(y)) | } \leq 0  \quad \quad \text{ for all } y \in [0, \xi_k].
\end{equation}
Moreover, since $y \in [0, \xi_k]$ we know that $y \leq 1/3$ so by \eqref{eq:1/3-upper-bound}
we have $g_k(y) \leq 1/3$, and hence by \eqref{eq:first-derivatives-down-1/2} we obtain
$1\leq  | v'_{k-1}( 1 - g_{k}(y)) |$. Consequently, \eqref{eq:first-derivative-g} gives us
\begin{equation}\label{eq:first-derivative-g-bound}
|g'_k(y)| \leq |v'_{k-1}(y)|  \quad \text{ for all } y \in [0, \xi_k],
\end{equation}
and \eqref{eq:first-derivatives-up-1/3} implies $|v'_{k}(y)|\leq 1$. Thus, at last, we have the uniform bound
\begin{equation}\label{eq:g-prime-bound}
\left| g'_k(y) \right| \leq 1 \quad \quad \text{ for all } y \in [0, \xi_k],
\end{equation}
which confirms the inequality \eqref{eq:g-Lip-1} for $y,z \in [0, \xi_k]$.
Also, for $y,z \in [\xi_k, 1]$ we have that \eqref{eq:g-Lip-1} trivially holds, so
if we choose $y < \xi_k < z$, the triangle inequality gives us
$$
| g_k(y) - g_k(z)| \leq  | g_k(y) - g_k(\xi_k) | + |g_k(\xi_k) - g_k(z)| \leq |y - z|,
$$
confirming that \eqref{eq:g-Lip-1} holds in general.
\end{proof}

\section{The Optimal Policy at Infinity}\label{se:optimal-policy-at-infty}

The minimal fixed points $\xi_k$, $1 \leq k < \infty$,  are non-decreasing and bounded
by $1/3$, so they have a limit
\begin{equation}\label{eq:def-xi}
\lim_{k\rightarrow \infty} \xi_k \stackrel{\text{def}}{=} \xi \leq 1/3.
\end{equation}
The threshold values $g_k(y)$, $1 \leq k < \infty$ are also non-decreasing
and bounded, so they have a pointwise limit $g_\infty(y)$. The next lemma characterizes
$g_\infty$ and gives a crucial bound on the uniform rate of convergence to $g_\infty$

\begin{proposition}[Characterization of Limiting Threshold]
For the limit threshold $g_\infty$, we have the formula
\begin{equation*}
g_\infty (y) =\max \{\xi, y\} \quad \text{for all } y \in [0,1].
\end{equation*}
Moreover, we have an exact measure of the uniform rate of convergence
\begin{equation}\label{eq:unif-conv-max1}
\max_{0 \leq y \leq 1} | g_k(y) - g_\infty(y) | = \xi - \xi_k \quad\quad \text{ for all } k \geq 1.
\end{equation}
\end{proposition}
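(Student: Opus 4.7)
The plan is to first pin down $g_\infty$ pointwise, and then to deduce the uniform rate by a short case analysis. For $y \geq \xi$ the formula is immediate: since $\xi_k \leq \xi \leq y$ for every $k$, the Lemma on Characterization of the Minimal Fixed Point gives $g_k(y) = y$ for all $k$, so $g_\infty(y) = y = \max\{\xi, y\}$. For the substantive case $y < \xi$, I split the target $g_\infty(y) = \xi$ into matching lower and upper bounds. The lower bound $g_\infty(y) \geq \xi$ is easy: for $k$ large enough one has $\xi_k > y$, and since \eqref{eq:first-derivative-g} gives $g'_k \leq 0$ on $[0, \xi_k]$, the function $g_k$ is non-increasing there, so $g_k(y) \geq g_k(\xi_k) = \xi_k$. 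Letting $k \to \infty$ yields $g_\infty(y) \geq \xi$.

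The upper bound $g_\infty(y) \leq \xi$ is the main obstacle. I would first reduce it to the single point claim $g_\infty(0) \leq \xi$: the non-increasingness of each $g_k$ on $[0, \xi_k]$ gives $g_k(y) \leq g_k(0)$ for $y \in [0, \xi_k]$, and passing to the limit yields $g_\infty(y) \leq g_\infty(0)$ for all $y \in [0, \xi)$. To control $g_\infty(0)$, I integrate the bound $|g'_k(u)| \leq |v'_{k-1}(u)|$ valid on $[0, \xi_k]$ (this is \eqref{eq:first-derivative-g-bound}) to obtain
\[
g_k(0) - \xi_k \;=\; \int_0^{\xi_k} |g'_k(u)| \, du \;\leq\; \int_0^{\xi_k} |v'_{k-1}(u)| \, du,
\]
and the crux is to show that the right-hand integral vanishes as $k \to \infty$. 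Iterating the identity $v'_k(u) = g_k(u) v'_{k-1}(u)$ that holds for $u \leq \xi_k$ (top line of \eqref{eq:v-first-derivative}) with the bound $g_k(u) \leq 1/3$ from \eqref{eq:1/3-upper-bound}, one obtains the exponential estimate $|v'_{k-1}(u)| \leq (1/3)^{k-K}$ for each fixed $K$, each $u \in [0, \xi_K]$, and each $k > K$ (using $|v'_{K-1}| \leq 1$ on $[0, 1/3]$, which is part of \eqref{eq:first-derivatives-up-1/3}). Splitting the integral at $\xi_K$, bounding the piece over $[0, \xi_K]$ by $\xi_K (1/3)^{k-K}$ and the piece over $[\xi_K, \xi_k]$ by $\xi - \xi_K$ via the uniform estimate $|v'_{k-1}| \leq 1$, and then sending first $k \to \infty$ and then $K \to \infty$, one forces the integral to zero. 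This gives $g_\infty(0) \leq \xi$, and combined with the lower bound one concludes $g_\infty(y) = \xi$ for $y < \xi$, completing the proof of the closed-form expression.

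The uniform rate statement then falls out from a short case check. The lower bound $\max_y |g_k(y) - g_\infty(y)| \geq \xi - \xi_k$ is attained at $y = \xi_k$, since $g_k(\xi_k) = \xi_k$ while $g_\infty(\xi_k) = \xi$. For the matching upper bound I consider three regimes. On $y \in [0, \xi_k]$, the non-increasingness of $g_k$ gives $g_k(y) \geq \xi_k$, while the monotonicity $g_k \leq g_\infty$ from Corollary \ref{cor:MonotoneThresholds} combined with the just-proved formula $g_\infty(y) = \xi$ yields $g_k(y) \leq \xi$; hence $|g_k(y) - g_\infty(y)| = \xi - g_k(y) \leq \xi - \xi_k$. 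On $y \in [\xi_k, \xi]$ we have $g_k(y) = y$ and $g_\infty(y) = \xi$, so the difference equals $\xi - y \leq \xi - \xi_k$. Finally, on $y \in [\xi, 1]$ both $g_k(y)$ and $g_\infty(y)$ coincide with $y$ and the difference vanishes, so the maximum equals exactly $\xi - \xi_k$.
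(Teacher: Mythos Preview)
Your proof is correct and rests on the same core estimate as the paper's: the iterated derivative identity $v'_k(u)=g_k(u)v'_{k-1}(u)$ on $[0,\xi_k]$ together with $g_k(u)\le 1/3$, which forces $|v'_{k-1}|$ (and hence $|g'_k|$) to decay geometrically on each $[0,\xi_K]$. The packaging, however, differs slightly. The paper first uses the decay to conclude that $g_\infty$ is \emph{constant} on each $[0,\xi_m]$ and then identifies that constant as $\xi$ via the triangle inequality and the global Lipschitz bound of Lemma~\ref{lm:lip-g}. You instead sandwich $g_\infty(y)$ directly: the lower bound $g_\infty(y)\ge \xi$ comes from $g_k(y)\ge g_k(\xi_k)=\xi_k$, and the upper bound is reduced to $g_\infty(0)\le\xi$ via the integral $g_k(0)-\xi_k\le\int_0^{\xi_k}|v'_{k-1}|$, which you kill by the split at $\xi_K$. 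Your route is marginally more self-contained in that it never invokes the Lipschitz lemma, while the paper's route makes the constancy of $g_\infty$ on $[0,\xi)$ explicit before identifying the value. For the uniform rate \eqref{eq:unif-conv-max1}, your three-regime case analysis coincides with the paper's.
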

\begin{proof}
We first fix $m$ and $y \in [0, \xi_m]$.
We then recall that $y \leq \xi_m \leq 1/3$ implies that $g_j(y) \leq 1/3$ for all $j \geq 1$.
Now, given $k \geq m$,  we can repeatedly apply the top line of \eqref{eq:v-first-derivative} to obtain
\begin{equation}\label{eq:v-first-derivative-iterated}
| v'_k (y) | = |v'_{m-1}(y)| \left(\prod_{j=m}^k g_j(y)\right) \leq 3^{m - k} |v'_{m-1}(y)|  \quad \text{for } y \in [0, \xi_m],
\end{equation}
and by \eqref{eq:first-derivatives-up-1/3} we have $|v'_{m-1} (y) | \leq 1$ for all $y \in [0, 1/3]$,
so \eqref{eq:first-derivative-g-bound} gives us more simply
\begin{equation}\label{eq:unf-bnd-g-prime-limit}
\max_{0 \leq y \leq \xi_m} | g'_k(y)|  \leq  3^{m - k} \quad \text{for all } k \geq m.
\end{equation}
Now, for any $y,z$ in $[0, \xi_m]$ we have $|g_k(y)-g_k(z)| \leq 3^{m-k}|y-z|$ so, letting $k\rightarrow \infty$,
we obtain that $g_\infty$ is constant on $[0, \xi_m]$ for each $m \geq 1$. Since $\xi_m \uparrow \xi$, there is a
constant $c$ such that $g_\infty(y)=c$ for all $y \in [0,\xi)$.

As Figure \ref{fig:thresholds} suggests, $c=\xi$ and this is easy to confirm.
Again we fix $m$, take $ k \geq m$, and note that by the triangle inequality and the
Lipschitz bound \eqref{eq:g-Lip-1} on $g_k$ we have
\begin{align*}
|g_\infty(\xi_m) -\xi_k| &\leq |g_\infty(\xi_m) -g_k(\xi_m)|+ |g_k(\xi_m) -g_k(\xi_k)|\\
&\leq |g_\infty(\xi_m) -g_k(\xi_m)| +  |\xi_m -\xi_k|.
\end{align*}
When  $k \rightarrow \infty$, $g_k(\xi_m)$ converges to $g_\infty(\xi_m)$
and $\xi_k$ to $\xi$ so we have
$$
|g_\infty(\xi_m) -\xi| \leq |\xi_m -\xi|.
$$
Since $g_\infty(\xi_m)=c$ does not depend on $m$ and since $|\xi_m -\xi| \rightarrow 0$ as $m \rightarrow \infty$,
we see that $g_\infty(\xi_m) = \xi$ for all $m\geq 1$ and consequently  $g_\infty(y) = \xi$ for all $ y \in [0,\xi]$.
Finally, for all $m\geq 1$, we also have $g_m(y)=y$ for each $y\in [\xi, 1]$, so the proof of the formula for $g_\infty$ is complete.

To prove \eqref{eq:unif-conv-max1}, we first note
$$
g_\infty(y) - g_k(y)=
\begin{cases}
\xi - g_k(y) & y \in [0,\xi_k], \\
\xi- y & y \in [\xi_k, \xi], \\
0 & y \in [\xi, 1].
\end{cases}
$$
By \eqref{eq:first-derivative-g}, $g_k(y)$ is strictly decreasing on $[0,\xi_k]$, so the
gap $g_\infty(y) - g_k(y)$ is maximized when $y=\xi_k$. This gap decreases linearly over
the interval $[\xi_k, \xi]$ and equals $0$ at $\xi$; consequently the maximal gap is exactly
equal to $\xi - \xi_k$.
\end{proof}

\section{The Central Limit Theorem for $A^o_{n}(\pi_\infty)$ Is Easy} \label{se:A-infty}

We now recall that $\xi$ denotes the limit \eqref{eq:def-xi} of the minimal fixed points, and
we define a selection policy $\pi_\infty$ for all $X_1, X_2, \ldots$ by taking the
(time independent) threshold function to be
$$
g_\infty(y) = \max\{\xi, y\} \equiv \xi \vee y.
$$
If $A^o_n(\pi_\infty)$ counts the number of selections made by policy $\pi_\infty$ up to and including time $n$, then we have
the explicit formula
\begin{equation}\label{eq:A-infinity}
A^o_{n}(\pi_\infty) = \sum_{i=1}^{n } \1\left( X_i \geq  \xi \vee Y'_{i-1}  \right),
\end{equation}
where one sets $Y'_0 = 0$, and one defines $Y'_i$ for $i \geq 1$ recursively by
\begin{equation}\label{eq:Yrecursion}
Y'_i =
\begin{cases}
    Y'_{i-1} & \text{if $X_i < \xi \vee Y'_{i-1} $}\\
    1 - X_{i} & \text{if $X_i \geq \xi \vee Y'_{i-1} $}.
\end{cases}
\end{equation}
Given the facts that have been accumulated, it turns out to be a reasonably easy task to prove a central limit theorem for
$A^o_n(\pi_\infty)$. One just needs to make the right connection to the known central limit theorems for
Markov additive processes.

To make this connection explicit, we first recall that, at any given time $1 \leq i \leq n$,
the decision maker knows the state of the selection process $Y'_{i-1}$ prior to time $i$,
and the decision maker also knows the value $X_i$ of the observation currently under consideration for selection.
The bi-variate random sequence $$\{ Z_i= (X_i, Y'_{i-1}): i = 1,2,3, \ldots \}$$ then represents the state of knowledge immediately
prior to the decision to accept or to reject $X_i$, and this sequence
may be viewed as a  Markov chain on the two-dimensional state space $\SS \equiv [0,1]\times [0, 1-\xi]$.
The Markov chain $\{ Z_i: i = 1,2,3, \ldots \}$ evolves over time according to a point-to-set
transition kernel that specifies the probability of moving from an arbitrary state $(x,y) \in \SS$
into a Borel set $C \subseteq \SS$ in one unit of time.
If we denote the transition kernel by $K((x,y), C)$, then we have the explicit formula
\begin{align*} 
K((x,y), C) & = \P\left( (X_{i+1}, Y'_{i}) \in C \, | \, X_i = x, Y'_{i-1} = y \right)\\
            & = \int_0^1 \!\!\! \big[ \1\{ (u,1-x) \in C \}\1(x \geq \xi \vee y) + \1\{ (u,y) \in C \} \1(x < \xi \vee y) \big]\, du,
\end{align*}
where the first summand of the integrand governs the transition when $X_i$ is chosen and the second summand
governs the transition when $X_i$ is rejected.
Given this explicit formula, it is straightforward (but admittedly a little tedious) to check that a
stationary probability measure for the kernel $K$ is given by the uniform distribution $\gamma$ on $\SS = [0,1]\times [0, 1-\xi]$.
We will confirm shortly that $\gamma$ is also the unique stationary distribution.

To more deeply understand the chain $Z_i$, $i=1,2, ...$, we now consider the
double chain $(Z_i, \bar{Z}_i)$, $i=1,2,\ldots$, where $Z_1=(x,y)$ is an arbitrary point of $\SS$
and $\bar{Z}_1$ has the uniform distribution on $\SS$. For $i=1,2,\ldots$, the
chains $\{Z_i=(X_i, Y'_{i-1})\}$ and  $\{\bar{Z}_i=(X_i, \bar{Y}'_{i-1}) \}$ share the same independent uniform sequence
$X_i$, $i=1,2,\ldots$, as their first coordinate, while their
second coordinates $Y'_{i-1}$ and  $\bar{Y}'_{i-1}$ are both determined by the recursion
\eqref{eq:Yrecursion}. Typically these coordinates differ because of their differing initial values, but we will check
that they do not differ for long.

To make this precise, we set $\nu=\min \{i \geq 1: X_i \geq 1 - \xi \}$, and we show that
$\nu$ is a \emph{coupling time} for
$(Z_i, \bar{Z}_i)$ in the sense that
$$
Z_i =\bar{Z}_i \quad \text{for all } i> \nu.
$$
Since $Y'_{i}$ and $\bar{Y}'_{i}$ both satisfy the recursion \eqref{eq:Yrecursion}, we have
$$
Y'_{i} \leq 1-\xi \quad \text{and} \quad \bar{Y}'_{i} \leq 1-\xi \quad \text{for all } i=1,2,\ldots,
$$
so by the definition of $\nu$, we must have
$$
\max \{ \xi \vee Y'_{\nu-1}, \xi \vee \bar{Y}'_{\nu-1} \} \leq X_\nu.
$$
The recursion \eqref{eq:Yrecursion} then gives us
$$
Y'_{\nu}=\bar{Y}'_{\nu}=1-X_\nu \quad \text{and} \quad Z_\nu = \bar{Z}_\nu.
$$
By the construction of the double process  $(Z_i, \bar{Z}_i)$, if one has
$Z_i(\omega) = \bar{Z}_i(\omega)$ for some $i=i(\omega)$, then $Z_j(\omega) = \bar{Z}_j(\omega)$ for all $j \geq i(\omega),$
so $\nu$ is indeed a coupling time for $(Z_i, \bar{Z}_i)$.

The coupling inequality \citeaffixed[p.~12]{Lin:DOVER2002}{see, e.g.,}
then tells us that for all Borel sets $C\subseteq \SS$, we have the total variation bound
\begin{equation}\label{eq:coupling-inequality}
\parallel K^\ell((x,y), C) - \gamma(C) \parallel_{\rm TV} \leq  \P(\nu > \ell) =  (1 - \xi)^{\ell},
\end{equation}
where $\gamma$ is the uniform stationary distribution on $\SS$.
The bound \eqref{eq:coupling-inequality} has several useful implications.
First, it implies that $\gamma$ is the \emph{unique} stationary distribution for the
chain with kernel $K$. It also implies \citeaffixed[Theorem 16.0.1]{MeyTwe:CUP2009}{see, e.g.,}
that the chain $\{ Z_i: i = 1,2,\ldots \}$ is uniformly ergodic; more specifically, it is a $\phi$--mixing chain with
$$
\phi(\ell) \leq 2 \rho^\ell \quad \text{and}  \quad \rho= 1-\xi.
$$

If we set $z = (x,y)$ and $f(z)=\1(x \geq y \vee \xi)$, then the representation \eqref{eq:A-infinity} can also be written
in terms the chain $\{Z_i: i=1,2,\ldots\}$ as
$$
A^o_{n}(\pi_\infty)=\sum_{i=1}^n f(Z_i),
$$
and this makes it explicit that $A^o_{n}(\pi_\infty)$ is a Markov additive process.
Our coupling and the uniform ergodicity of  $\{ Z_i: i = 1,2,\ldots \}$ imply
\citeaffixed[Theorem 17.5.3 and Lemma 17.5.1]{MeyTwe:CUP2009}{see, e.g.,}
that there is a constant $\sigma^2\geq 0$ such that
\begin{equation}\label{eq:variance-asymptotics}
\lim_{n \rightarrow \infty } n^{-1} \Var\left( A^o_{n}(\pi_\infty) \right) =
\lim_{n \rightarrow \infty } n^{-1} \Var_\gamma \left( A^o_{n}(\pi_\infty) \right)= \sigma^2,
\end{equation}
where the first variance refers to the chain started at $Z_1=(X_1,0)$ and the second variance
refers to the chain started at $Z_1$ with the stationary distribution $\gamma$ (i.e.~the uniform distribution on $\SS$).
The general theory also provides the series representation for the limit \eqref{eq:variance-asymptotics}:
\begin{align}\label{eq:sigma^2-Ainfty}
\sigma^2 & = \Var_\gamma\left[\1\left( X_1 \geq \big\{ \xi \vee Y'_{0} \big\} \right)\right] \\
& + 2 \sum_{i=2}^\infty \Cov_\gamma \left[\1\left( X_1 \geq \big\{ \xi \vee Y'_{0}\big\} \right),
\1\left( X_i \geq \big\{ \xi \vee Y'_{i-1}\big\} \right) \right], \nonumber
\end{align}
where the subscript $\gamma$ again refers to the situation in which the chain starts with $Z_1$ having the stationary distribution.

The general representations \eqref{eq:variance-asymptotics} and \eqref{eq:sigma^2-Ainfty} give us the existence of $\sigma^2$
but they do not automatically entail $\sigma^2 > 0$,
so to prove a central limit theorem for $A^o_{n}(\pi_\infty)$ with the classical normalization, one must independently
establish that $\sigma^2 > 0$. To show this, we first need an elementary lemma that provides a variance analog
to the information processing inequality for entropy.

\begin{lemma}[Information Processing Lemma]\label{lm:IP-var}
If a random variable $X$ has values in $\{1,2, \ldots \}$ and $P(X=1)=p$, then $p(1-p) \leq \Var (X)$.
\end{lemma}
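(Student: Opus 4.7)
The plan is to reduce the statement to a standard variance identity by introducing an auxiliary Bernoulli variable. I set $Y=\1(X=1)$, so that $Y$ is Bernoulli$(p)$ with $\Var(Y)=p(1-p)$. The task then becomes to prove the contraction inequality $\Var(Y)\leq \Var(X)$, from which the lemma follows at once.

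The key observation is that the indicator map $f(x)=\1(x=1)$ is $1$-Lipschitz on the support $\{1,2,\ldots\}$ of $X$. Indeed, if $x=x'$ the inequality $|f(x)-f(x')|\leq |x-x'|$ is trivial, and if $x\neq x'$ then $|x-x'|\geq 1\geq |f(x)-f(x')|$. Squaring yields the pointwise bound $(f(x)-f(x'))^2\leq (x-x')^2$ for every pair $x,x'$ in $\{1,2,\ldots\}$.

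To convert this pointwise bound into a variance bound, I introduce an independent copy $X'$ of $X$, set $Y'=\1(X'=1)$, and invoke the elementary identity $\Var(Z)=\tfrac12\E[(Z-Z')^2]$, valid for any square-integrable $Z$ with independent copy $Z'$. Taking expectations in the pointwise inequality and applying this identity twice gives
$$
p(1-p)=\Var(Y)=\tfrac12\E[(Y-Y')^2]\leq \tfrac12\E[(X-X')^2]=\Var(X),
$$
which is exactly the claim. There is no real obstacle; the only point worth flagging is that the integer-valued hypothesis enters essentially in the Lipschitz step, since on a continuous support the indicator $\1(\cdot=1)$ is not Lipschitz and no such contraction can hold.
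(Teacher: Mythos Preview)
Your proof is correct and is essentially the same as the paper's: both introduce a $1$-Lipschitz indicator on $\N$ (the paper uses $f(1)=0$, $f(k)=1$ for $k>1$, you use its complement $\1(x=1)$), take an independent copy, and apply the identity $\Var(Z)=\tfrac12\E[(Z-Z')^2]$ to pass from the pointwise Lipschitz bound to the variance inequality. The only cosmetic difference is that you name the Bernoulli variable $Y=\1(X=1)$ explicitly, whereas the paper writes $f(X)$ throughout.
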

\begin{proof} Define a function $f$ on the natural numbers $\N$ by setting $f(1)=0$ and $f(k)=1$ for $k > 1$. We then have
$|f(x)-f(y)|\leq |x-y|$ for all $x,y \in \N$. If we take $Y$ to be an independent copy of $X$, then we have
$$
2 p(1-p)=E[(f(X)-f(Y))^2] \leq E[(X-Y)^2]=2 \Var (X).
$$ \end{proof}

Now we can address the main lemma of this section.

\begin{lemma}\label{lm:variance-lower-bound}
There are constants $\alpha > 0$ and $N_* < \infty$ such that
$$
\alpha \, n  \leq \Var \left( A^o_{n}(\pi_\infty) \right) \quad \text{for all } n \geq N_*.
$$
\end{lemma}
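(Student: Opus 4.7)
The plan is to establish that the asymptotic variance $\sigma^2$ in \eqref{eq:variance-asymptotics} is strictly positive, from which the lemma is immediate by choosing $\alpha = \sigma^2/2$ and $N_*$ large enough that $n^{-1}\Var(A^o_n(\pi_\infty)) \geq \alpha$ for all $n \geq N_*$. To do so, I will expose a regenerative decomposition of the chain $\{Z_i\}$ that is already foreshadowed by the coupling preceding \eqref{eq:coupling-inequality}, and then apply the Information Processing Lemma (Lemma \ref{lm:IP-var}) to quantify the randomness of a single cycle.

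The regenerative structure is built as follows. Set $\nu_0 = 0$ and $\nu_k = \inf\{i > \nu_{k-1} : X_i \geq 1-\xi\}$ for $k \geq 1$. Because the events $\{X_i \geq 1-\xi\}$ are iid Bernoulli$(\xi)$, the inter-regeneration times $T_k = \nu_k - \nu_{k-1}$ are iid Geometric$(\xi)$. The crucial point is that $Y'_{\nu_k} = 1 - X_{\nu_k} \in [0,\xi]$, so the threshold $\xi \vee Y'_{\nu_k}$ equals $\xi$ identically. Hence the within-cycle dynamics depend only on the iid uniforms $X_{\nu_{k-1}+1}, \ldots, X_{\nu_k}$---not on the particular starting state---and the cycle contributions $B_k = A^o_{\nu_k}(\pi_\infty) - A^o_{\nu_{k-1}}(\pi_\infty)$ form an iid sequence with $B_k \geq 1$ (the regeneration time $\nu_k$ is itself a selection). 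The standard regenerative central limit theorem (see, e.g., Asmussen, \emph{Applied Probability and Queues}, Ch.~VI) applied to this decomposition identifies
$$\sigma^2 = \frac{\Var(B_1 - \mu T_1)}{\E[T_1]}, \qquad \mu = \frac{\E[B_1]}{\E[T_1]},$$
so the remaining task reduces to proving $\Var(B_1 - \mu T_1) > 0$.

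I plan to obtain this lower bound by conditioning on $\{T_1 = 2\} = \{X_1 < 1-\xi,\, X_2 \geq 1-\xi\}$, which has probability $\xi(1-\xi) > 0$. A short case analysis shows that on this event $B_1$ takes values in $\{1, 2\}$, with $B_1 = 1$ if and only if $X_1 < \xi$; therefore $P(B_1 = 1 \mid T_1 = 2) = \xi/(1-\xi)$. Combining Corollary \ref{cor:MonotoneThresholds} with \eqref{eq:def-xi} yields $1/6 \leq \xi \leq 1/3$, placing this conditional probability strictly inside $(0,1)$. The Information Processing Lemma then gives $\Var(B_1 \mid T_1 = 2) \geq \xi(1-2\xi)/(1-\xi)^2 > 0$, and the law of total variance produces
$$\Var(B_1 - \mu T_1) \geq \E[\Var(B_1 \mid T_1)] \geq P(T_1 = 2) \cdot \Var(B_1 \mid T_1 = 2) > 0,$$
which closes the chain.

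The main obstacle is the clean verification of the regenerative structure---in particular, the assertion that the joint law of $(T_k, B_k)$ is independent of the specific value of $Y'_{\nu_{k-1}} \in [0, \xi]$. This rests squarely on the constant-threshold identity $\xi \vee Y'_{\nu_{k-1}} = \xi$, itself a consequence of the bound $\xi \leq 1/3$. Once the iid cycle structure is in hand, the invocation of the regenerative CLT formula for $\sigma^2$ is standard and the Information Processing Lemma delivers the required positivity essentially in one line.
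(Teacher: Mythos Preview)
Your argument is correct and shares its essential ingredients with the paper's proof: the same regeneration times $\nu_k=\inf\{i>\nu_{k-1}:X_i\geq 1-\xi\}$, the same observation that $Y'_{\nu_{k-1}}\in[0,\xi]$ collapses the threshold to $\xi$ so that the cycle pairs $(T_k,B_k)$ are iid, and the same appeal to Lemma~\ref{lm:IP-var} to certify nontrivial within-cycle randomness. The divergence is only in the packaging. The paper works directly with $\Var(A^o_n(\pi_\infty))$: it writes $A^o_n(\pi_\infty)=\sum_{t=1}^{T(n)}U_t-V$, removes $V$ at cost $O(\sqrt{n})$, conditions on the $\sigma$-field $\G_{T(n)}$ generated by all of the regeneration times, applies Lemma~\ref{lm:IP-var} to each $U_t$ given its own cycle length, and then sums via Wald's identity to obtain an explicit linear lower bound---with no appeal to an external variance formula. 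You instead import the regenerative-CLT identification $\sigma^2=\Var(B_1-\mu T_1)/\E[T_1]$ and isolate positivity by conditioning on the single event $\{T_1=2\}$, where a two-line case check pins down the conditional law of $B_1$. Your route is shorter and the $\{T_1=2\}$ computation is pleasantly concrete, but it leans on the Asmussen reference for the variance formula; the paper's route is self-contained at the cost of a bit more bookkeeping with $T(n)$, $V$, and Wald.
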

\begin{proof}
We first set $\nu_0 \equiv 0$ and then define the stopping times
$$
\nu_t = \inf\{ i > \nu_{t-1}: \,  X_i \geq 1-\xi\}, \quad \quad t= 1,2, \ldots.
$$
We also set $ T(n) = \inf\{t :\,  \nu_t \geq n\}, $ and note that $T(n)$ is a stopping time with respect to the increasing sequence of $\sigma$-fields
\begin{equation*}\label{eq:def-G}
\G_t = \sigma\{\nu_1, \nu_2, \ldots, \nu_t\} \quad \text{for all } t \geq 1.
\end{equation*}
Next, we set
\begin{equation}\label{eq:def-U}
U_t = \sum_{i = \nu_{t-1}+1}^{\nu_t} \1\left( X_i \geq  \xi \vee Y'_{i-1}  \right) \quad \text{for } 1\leq t \leq T(n) \, \,\, \text{ and set}
\end{equation}
$$
V = \sum_{i=n+1}^{\nu_{T(n)}} \1\left( X_i \geq  \xi \vee Y'_{i-1}  \right),
$$
so we have the representation
\begin{equation}\label{eq:Ainfty-block-representation}
A^o_{n}(\pi_\infty)  = A^o_{\nu_{T(n)}}(\pi_\infty)  - V = \sum_{t=1}^{T(n)} U_t - V.
\end{equation}
Here, the random variables $U_t$, $t=1,2,\ldots$, are independent and identically distributed. We also have $V \leq \nu_{T(n)} - n$ and
$\nu_{T(n)} = \inf\{ i \geq n: X_i \geq 1-\xi\}$, so the
variance of $V$ is bounded by a constant that depends only on $\xi$.
The existence of the limit \eqref{eq:variance-asymptotics} and the Cauchy-Schwarz inequality then give us
\begin{equation}\label{eq:VarAn-VarAnu}
\Var \left( A^o_{n}(\pi_\infty) \right) = \Var\left( A^o_{\nu_{T(n)}}(\pi_\infty) \right) + O(\sqrt{n} ) \quad \quad \text{ as } n \rightarrow \infty,
\end{equation}
so to prove the lemma it suffices to obtain a linear lower bound for $\Var( A^o_{\nu_{T(n)}}(\pi_\infty) )$.

By the definition of $\nu_{T(n)}$ and $U_t$, $t=1,2,\ldots$, we have
$$
A^o_{\nu_{T(n)}}(\pi_\infty) = \sum_{t=1}^{T(n)} U_t
$$
so, by the conditional variance formula, the independence of the $U_t$'s,
and the fact that ${T(n)}$ is $\G_{T(n)}$ measurable,
we have the bound
\begin{equation}\label{eq:variance-lower-bound}
\Var\big(  \sum_{t=1}^{T(n)} U_t \big) \geq \E\big[ \Var\big(  \sum_{t=1}^{T(n)} U_t \, | \, \G_{T(n)} \big) \big]
= \E\big[ \sum_{t=1}^{T(n)} \Var\big( U_t \, | \, \G_{T(n)} \big) \big].
\end{equation}
We now note from the definition \eqref{eq:def-U} that $U_t$ takes values in  $\{1, 2, \ldots, \nu_t- \nu_{t-1}\}$.
Thus, if $p$ is the probability  that no $X_i$ is selected for $i \in \{ \nu_{t-1}+1, \ldots, \nu_t-1 \}$, then setting
$a = (1 - \xi)^{-1} \xi $, we have
$$
p = \P( U_t=1 \,|\, \G_{T(n)})=\P\left(X_i< \xi \,\, \forall \,\, \nu_{t-1}+1 \leq i \leq \nu_t-1 \, | \, \G_{T(n)} \right) = a^{\nu_t - \nu_{t-1} - 1 }.
$$
Now, by applying Lemma \ref{lm:IP-var} to the conditional expectation, we have
$$
\Var\big( U_t \, | \, \G_{T(n)} \big) \geq a^{\nu_t - \nu_{t-1} - 1 }\left( 1 - a^{\nu_t - \nu_{t-1} - 1 } \right),
$$
so from \eqref{eq:variance-lower-bound}, we have
$$
\Var\big(  \sum_{t=1}^{T(n)} U_t \big) \geq  \E\big[ \sum_{t=1}^{T(n)} a^{\nu_t - \nu_{t-1} - 1 }\left( 1 - a^{\nu_t - \nu_{t-1} - 1 } \right) \big].
$$
The summands are independent and identically distributed and ${T(n)}$ is a stopping time
with respect to the increasing sequence of $\sigma$-fields $\G_t = \sigma\{\nu_1, \nu_2, \ldots, \nu_t\}$, $t \geq 1$,
so by Wald's identity, we have
\begin{equation}\label{eq:variance-lower-bound-Anu}
\Var\big(  \sum_{t=1}^{T(n)} U_t \big) \geq \E\left[ T(n) \right] \E\left[ a^{\nu_1 - 1 }\left( 1 - a^{\nu_1 - 1 } \right)\right].
\end{equation}
For the stopping time ${T(n)}$, we have the alternative representation
$$
{T(n)} = 1 + \sum_{i=1}^{n-1} \1\left( X_i \geq 1-\xi \right),
$$
so we have $ \E\left[{T(n)}\right] = \xi \, n + O(1)$. Since $\nu_1$ has the geometric distribution with success probability $\xi$,
we also have $\E\left[ a^{\nu_1 - 1 }\left( 1 - a^{\nu_1 - 1 } \right)\right] > 0$, so by \eqref{eq:VarAn-VarAnu} and
\eqref{eq:variance-lower-bound-Anu} the proof of the lemma is complete.
\end{proof}

All of the pieces are now in place. By the central limit theorem for functions of uniformly ergodic Markov chains
(\citename{MeyTwe:CUP2009}, \citeyear*{MeyTwe:CUP2009}, Theorem 17.5.3; or \citename{Jon:PS2004}, \citeyear*{Jon:PS2004}, Corollary 5),
we get our central limit theorem for $A^o_{n}(\pi_\infty)$.

\begin{proposition}[Central Limit Theorem for $A^o_{n}(\pi_\infty)$]\label{pr:clt-Ainfty}
As $n \rightarrow \infty$, we have the limit
$$
\frac{A^o_{n}(\pi_\infty) - \mu \, n }{\sqrt{n}} \Longrightarrow N(0, \sigma^2),
$$
where $\mu = \E_\gamma \left[ \1\left( X_1 \geq \{ \xi \vee Y'_{0}\} \right) \right]$,
$\gamma$ is the stationary distribution for the Markov chain $\{Z_i: i=1,2,\ldots\}$,
and $\sigma^2$ is the constant defined by either the limits \eqref{eq:variance-asymptotics} or the sum \eqref{eq:sigma^2-Ainfty}.
\end{proposition}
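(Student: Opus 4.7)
The plan is to recognize $A^o_n(\pi_\infty)$ as a Markov additive functional of the uniformly ergodic chain $\{Z_i:i\geq 1\}$ and then to invoke an off-the-shelf central limit theorem for such functionals; Lemma~\ref{lm:variance-lower-bound} will supply the one nontrivial ingredient, namely that the limiting variance is nondegenerate.

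First I would record the two structural hypotheses needed for the cited versions of the Markov chain CLT (Theorem~17.5.3 of Meyn and Tweedie, or Corollary~5 of Jones). Writing $z=(x,y)$ and $f(z)=\1(x\geq \xi\vee y)$, the representation $A^o_n(\pi_\infty)=\sum_{i=1}^{n} f(Z_i)$ has already been established, and the coupling bound \eqref{eq:coupling-inequality} shows that $\{Z_i\}$ is uniformly ergodic with unique stationary distribution $\gamma$ and with $\phi$-mixing coefficients $\phi(\ell)\leq 2(1-\xi)^\ell$. Since $f$ is bounded, it is certainly square integrable against $\gamma$, so the cited theorems apply to the chain started from $\gamma$ and yield
\[
n^{-1/2}\Bigl(\sum_{i=1}^{n} f(Z_i)-n\,\E_\gamma[f]\Bigr)\Longrightarrow N(0,\sigma^2),
\]
with $\sigma^2$ equal to the series \eqref{eq:sigma^2-Ainfty}.

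Next I would transfer the CLT from the stationary start to the actual initial state $Z_1=(X_1,0)$ used to define $A^o_n(\pi_\infty)$. The coupling already constructed furnishes a random time $\nu$ with $\P(\nu>\ell)\leq(1-\xi)^\ell$ after which the two copies of the chain agree pathwise; consequently the stationary and non-stationary versions of $\sum_{i=1}^n f(Z_i)$ differ in absolute value by at most $\nu$, which is bounded in $L^2$ and hence trivially $o(\sqrt{n})$. The same coupling also shows that $n^{-1}\E[A^o_n(\pi_\infty)]\to\mu=\E_\gamma[f]$, so the centering constants match.

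The one place where real content is required is ruling out a degenerate Gaussian limit, and this is precisely what Lemma~\ref{lm:variance-lower-bound} accomplishes: combined with the variance asymptotics \eqref{eq:variance-asymptotics}, the bound $\alpha n\leq \Var(A^o_n(\pi_\infty))$ forces $\sigma^2\geq\alpha>0$. Thus no further obstacle remains — the variance lower bound has already been discharged with care, and everything else is a direct appeal to well-packaged results on uniformly ergodic Markov chains.
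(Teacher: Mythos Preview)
Your proposal is correct and follows essentially the same approach as the paper: the section leading up to the proposition has already assembled the Markov additive representation, the uniform ergodicity via the coupling bound \eqref{eq:coupling-inequality}, the variance asymptotics \eqref{eq:variance-asymptotics}, and Lemma~\ref{lm:variance-lower-bound}, and then simply invokes the Meyn--Tweedie/Jones CLT exactly as you do. Your explicit remark about transferring the CLT from the stationary start to the fixed initial state $Z_1=(X_1,0)$ via the coupling time $\nu$ is a helpful clarification that the paper leaves largely implicit, but otherwise the arguments coincide.
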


By appealing to the known relation \eqref{eq:MeanAsymptotics} that  $\E[A^o_n(\pi^*_n)]  =  (2 - \sqrt{2}) n + O(1)$,
one can show with a bit of calculation that here we have $\mu=2-\sqrt{2}$. Since this identification is
implicit in the calculations of the next section,
there is no reason to belabor it here.

\section{$A_n^o(\pi^*_n)$ and $A_n^o(\pi_\infty)$ are Close in $L^2$} \label{se:CLT-optimal-A}

Proposition \ref{pr:clt-Ainfty} tells us that the easy sum $A^o_{n}(\pi_\infty)$
obeys a central limit theorem, and now the task is to show that the harder sum $A^o_{n}(\pi^*_n)$ follows the same law.
The essence is to show that, after centering, the random variables
$A^o_{n}(\pi^*_n)$ and $A^o_{n}(\pi_\infty)$ are close in $L^2$
in the sense that $\parallel A^o_n(\pi^*_n) - A^o_n(\pi_\infty) - \E\left[ A^o_n(\pi^*_n) - A^o_n(\pi_\infty)  \right] \parallel_2 = o(\sqrt{n})$
as $n \rightarrow \infty$.
For technical convenience, we work with the random variable
$$
\Delta_n \stackrel{\rm def}{=} A^o_{n-2}(\pi^*_n) - \E\left[ A^o_{n-2}(\pi^*_n) \right]
- A^o_{n-2}(\pi_\infty) + \E\left[ A^o_{n-2}(\pi_\infty) \right].
$$
The essential estimate of our development is given by the next lemma. In one way or another,
the proof of the lemma calls on all of the machinery that has been developed.

\begin{lemma}[$L^2$-Estimate] There is a constant $C$ such that, for all $n \geq 3$, we have
$$
\parallel \Delta_n \parallel_2^2 \leq C \sum_{k=3}^{n} (\xi - \xi_{k});
$$
so, in particular, we have the asymptotic estimate
$$
\parallel \Delta_n \parallel_2 = o(\sqrt{n}) \quad \quad \text{ as } n \rightarrow \infty.
$$
\end{lemma}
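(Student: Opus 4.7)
My plan is to drive both policies $\pi^*_n$ and $\pi_\infty$ with the common sequence $X_1, X_2, \ldots$, producing states $Y_i$ (for $\pi^*_n$) and $Y'_i$ (for $\pi_\infty$) via their respective recursions. Writing
$$
D_i = \1\!\left(X_i \geq g_{n-i+1}(Y_{i-1})\right) - \1\!\left(X_i \geq g_\infty(Y'_{i-1})\right),
$$
we have $A^o_{n-2}(\pi^*_n) - A^o_{n-2}(\pi_\infty) = \sum_{i=1}^{n-2} D_i$ and $\Delta_n = \sum_{i=1}^{n-2}(D_i - \E[D_i])$, so the task reduces to bounding $\| \sum_i (D_i - \E[D_i]) \|_2^2$.

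The first input I would use is the uniform convergence rate $\max_y |g_k(y) - g_\infty(y)| = \xi - \xi_k$ together with the Lipschitz-$1$ equi-continuity of the thresholds from Section 5. Conditional on $Y_{i-1} = Y'_{i-1}$, the two indicators defining $D_i$ disagree only when $X_i$ lies in the window between $g_{n-i+1}(Y_{i-1})$ and $g_\infty(Y_{i-1})$, a window of length at most $\xi - \xi_{n-i+1}$; hence $\P(D_i \neq 0 \mid Y_{i-1} = Y'_{i-1}) \leq \xi - \xi_{n-i+1}$. If the states have already drifted, the bound inflates by $|Y_{i-1} - Y'_{i-1}|$, and I would control this drift through a re-coupling estimate below.

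Next, I would decompose $\Delta_n = \sum_{i=1}^{n-2} M_i$ into martingale differences $M_i = \E[\Delta_n \mid \F_i] - \E[\Delta_n \mid \F_{i-1}]$ with respect to $\F_i = \sigma(X_1, \ldots, X_i)$, so that orthogonality gives $\|\Delta_n\|_2^2 = \sum_{i=1}^{n-2} \E[M_i^2]$. To bound $\E[M_i^2]$, I would use a resampling argument: replacing $X_i$ by an independent copy $\widetilde X_i$, the immediate change in $D_i$ is controlled by the window estimate, while the downstream effect is controlled by a re-coupling time, in direct analogy with Section 7. Once the two state processes disagree, they re-couple as soon as $X_j \geq 1 - \xi$ occurs, since such an $X_j$ forces both policies to select and sets $Y_j = Y'_j = 1 - X_j$. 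This re-coupling time is stochastically dominated by a geometric random variable with parameter $\xi$, so the expected number of step-$j$ disagreements attributable to a single "near miss" at step $i$ is $O(1)$, giving $\E[M_i^2] \leq C(\xi - \xi_{n-i+1})$ for a universal constant $C$, and summing over $i$ yields the claimed bound.

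The hard part I anticipate is precisely this bookkeeping in the resampling step: one must verify that the downstream perturbation in $\sum_j D_j$ caused by the swap $X_i \leftrightarrow \widetilde X_i$ stays uniformly bounded in $L^2$, carefully combining the Lipschitz bound on $g_k$, the monotonicity $\xi_k \uparrow \xi$, and the geometric re-coupling, while allowing the two states to have drifted apart earlier in the run. Once the squared-$L^2$ bound $\|\Delta_n\|_2^2 \leq C \sum_{k=3}^{n} (\xi - \xi_k)$ is in hand, the asymptotic assertion $\|\Delta_n\|_2 = o(\sqrt{n})$ is immediate from Cesàro summability: because $\xi - \xi_k \to 0$, we have $n^{-1} \sum_{k=3}^{n}(\xi - \xi_k) \to 0$, so $\sum_{k=3}^n (\xi - \xi_k) = o(n)$ and therefore $\|\Delta_n\|_2 = o(\sqrt{n})$.
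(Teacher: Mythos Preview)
Your overall strategy---Doob martingale decomposition of $\Delta_n$, geometric re-coupling via a common ``renewal'' event, and control of disagreements by the threshold gap $\xi-\xi_k$---is exactly the paper's approach. Two issues, one minor and one real.

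First, the re-coupling level $1-\xi$ does not force $\pi^*_n$ to select. For the optimal-policy state one only has $Y_{j-1}\le 5/6$ (from $g_k\ge 1/6$ for $k\ge 3$), and since $g_k(y)=y$ on $[1/3,1]$ the optimal threshold can be as large as $5/6$. You must use $X_j\ge 5/6$, as the paper does, to guarantee that \emph{both} policies select and that $Y_j=Y'_j=1-X_j$.

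Second, and more substantively, the pointwise bound $\E[M_i^2]\le C(\xi-\xi_{n-i+1})$ does not follow from your argument and need not hold. After the cancellation of terms beyond the next renewal $\tau'$, the surviving piece $\sum_{j=i}^{\tau'}(\eta_j-\eta'_j)$ is nonzero only if a ``near miss'' occurs somewhere in the \emph{entire renewal block} containing $i$: a near miss at some $j'<i$ in the same block already de-couples $Y$ from $Y'$, and near misses at $j'>i$ also count. The window at step $j'$ has width $\xi-\xi_{n-j'+1}$, which for $j'>i$ is \emph{larger} than $\xi-\xi_{n-i+1}$ (by monotonicity of $\xi_k$); without a quantitative rate on $\xi_k\uparrow\xi$ you cannot absorb these larger windows into $C(\xi-\xi_{n-i+1})$. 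The paper does not try for a pointwise bound. It conditions on the age $L(j)=\ell$ and residual life $M(j)=m$ of the block, bounds the disagreement probability by $(\ell+m)(\xi-\xi_{n-(j+m)+1})$ using the \emph{largest} window in the block, obtains
\[
\E[d_j^2]\le C\sum_{\ell,m}(\ell+m)^3(\xi-\xi_{n-(j+m)+1})(1-p)^{\ell+m-1},
\]
and only then sums over $j$ and interchanges: after the substitution $r=j+m$ the inner double sum $\sum_{j<r}\sum_{\ell}(\ell+r-j)^3(1-p)^{\ell+r-j-1}$ is bounded by the absolute constant $\sum_{u\ge 1}u^4(1-p)^{u-1}$, leaving $C\sum_r(\xi-\xi_{n-r+1})$. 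Your proposal contains all the right ingredients but skips this interchange, which is precisely what replaces the unavailable pointwise estimate.
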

\begin{proof}
We first note that the threshold function lower bound \eqref{eq:lower-bound-gk-inLemma} implies that $Y_i \leq 5/6$ for all $1 \leq i \leq n-2$.
Consequently, if $X_i \geq 5/6$, then $X_i$ is selected by both of the policies $\pi^*_n$ and $\pi_\infty$.
At such a time $i$, we have a kind of ``renewal event," though we still have to be attentive to the non-homogeneity of
the selection process driven by $\pi^*_n$.

To formalize this notion, we set $\tau_0 = 0$ and, for $m\geq 1$, we define stopping times
$$
\tau_m = \inf\left\{ i> \tau_{m-1} : \, X_i \geq 5/6 \right\} \quad\quad \text{and} \quad \quad \tau'_m = \min\{ \tau_m, n-2 \};
$$
so $\tau_m$ is the time at which the $m$th ``renewal'' is observed.
For each $1 \leq j \leq n-2$, we then set
$$
N(j) = \sum_{i=1}^j \1( X_i \geq 5/6 ),
$$
so the time $\tau_{N(j)}$ is the time of the last renewal up to or equal to $j$, the time
$\tau_{N(j)+1}$  is the time of the first renewal strictly after $j$, and we have the inclusion
$$
\tau_{N(j)}\leq j < \tau_{N(j)+1}.
$$
For $1\leq j \leq n-2$, we then consider the martingale differences defined by
$$
d_j= \E\left[ A^o_{n-2}(\pi^*_n) - A^o_{n-2}(\pi_\infty) | \F_j \right]- \E\left[ A^o_{n-2}(\pi^*_n) - A^o_{n-2}(\pi_\infty) | \F_{j-1} \right],
$$
where $\F_0$ is the trivial $\sigma$-field and $\F_j = \sigma\{X_1,X_2,  \ldots, X_j\}$ for $1\leq j \leq n$.
Using the counting variables
$$\eta_i \equiv \1\left( X_i \geq g_{n-i+1}(Y_{i-1}) \right)
\quad \text{and} \quad
\eta'_i \equiv \1\left( X_i \geq \xi \vee Y'_{i-1}  \right),
$$
we have the tautology
\begin{align}\label{eq:dj-representation-break-sum}
d_j  & = \E [ \sum_{i=j}^{\tau'_{N(j)+1}} (\eta_i- \eta'_i) \, | \, \F_{j} ]
       - \E [ \sum_{i=j}^{\tau'_{N(j)+1}} (\eta_i- \eta'_i) \, | \, \F_{j-1} ] \\
     & + \E [ \sum_{i=\tau'_{N(j)+1} + 1}^{n-2} \!\!\!\!(\eta_i- \eta'_i) \, | \, \F_{j} ] \nonumber
       - \E [ \sum_{i=\tau'_{N(j)+1}+1}^{n-2} \!\!\!\! (\eta_i- \eta'_i) \, | \, \F_{j-1} ],
\end{align}
and this becomes more interesting after one checks that the last two terms cancel.

To confirm the cancelation, we first recall that, for $\tau_{N(j)+1} < n-2$, the value $X_{\tau_{N(j)+1}} \geq 5/6$
is selected as a member of the alternating subsequence under both policies $\pi^*_n$ and $\pi_\infty$, so we also have
$$
Y_{\tau_{N(j)+1}} = Y'_{\tau_{N(j)+1}} = 1 - X_{\tau_{N(j)+1}}.
$$
Any difference in the selections that are made by the policies $\pi^*_n$ and $\pi_\infty$ after time $\tau_{N(j)+1}$
is measurable with respect to the $\sigma$-field
$$
\T_j = \sigma\{X_{\tau_{N(j)+1}}, X_{\tau_{N(j)+1}+1}, \ldots, X_{n-2}\}.
$$
Trivially, we have $j < \tau_{N(j)+1}$, so $\F_j$ is independent of  $\T_j$, and
the last two addends in \eqref{eq:dj-representation-break-sum} do cancel as claimed.

We can therefore write
\begin{equation}\label{eq:dj-stopped}
d_j = \E [ \, \sum_{i=j}^{\tau'_{N(j)+1}} (\eta_i- \eta'_i) \, | \, \F_{j} ]
       - \E [ \, \sum_{i=j}^{\tau'_{N(j)+1}} (\eta_i- \eta'_i) \, | \, \F_{j-1} ] =  W_j - I_{j-1}(W_j),
\end{equation}
where $W_j$ denotes the first summand and $I_{j-1}$ is the projection onto $L^2(\F_{j-1})$.
Denoting the identity by $I$, we have that  $I-I_{j-1}$ is an $L^2$ contraction, so
\begin{equation}\label{eq:difference-MDS-second-moment}
\E \left[ d^2_j \right] \leq \E \left[ W_j^2 \right]=\E \big[ \big( \sum_{i=j}^{\tau'_{N(j)+1} } ( \eta_i - \eta'_i ) \big)^2 \big],
\end{equation}
and the remaining task is to estimate the last right-hand side.

For $1\leq j \leq n-2$, we let $L(j)$ denote time from $j$ since the last renewal preceding $j$; in other words, $L(j)$ is
the \emph{age} at time $j$. Analogously, we let $M(j)$ denote
the time from $j$ until the time of the next renewal or until time $n-2$;  so $M(j)$ is the \emph{residual life} at time $j$ with truncation at time
$n-2$. We then have
$$
L(j) = j - \tau_{N(j)} \quad \quad \text{ and } \quad \quad  M(j) = \tau'_{N(j) + 1} - j.
$$
Our interarrival times are geometric, so $L(j)$ and $M(j)$ are independent, and for $p = 1/6$ we have
$$
\P(L(j) = \ell ) =
\begin{cases}
    p (1-p)^{\ell}   & \quad \text{if $0 \leq \ell < j$} \\
    (1-p)^{j}       & \quad \text{if $\ell = j$,}
\end{cases}
$$
and
$$
\P(M(j) = m ) =
\begin{cases}
    p (1-p)^{m -1}  &  \quad \text{ if $1 \leq m < n-2-j$} \\
    (1-p)^{n-3-j}   &  \quad \text{ if $m = n-2-j$.}
\end{cases}
$$

We now introduce the \emph{disagreement} set
$$
D_j[\ell, m] = \left\{\omega: \exists \, i \in \{j-\ell + 1, \ldots, j, \ldots, j+m \} : \, X_i(\omega) \in [\xi_{n-i+1}, \xi] \right\};
$$
this is precisely the set of $\omega$ for which,
if $Y_{j-\ell} = Y'_{j-\ell}$, then the policies $\pi_\infty$ and $\pi_n^*$ differ in at least one
selection during the time interval $\{ j-\ell+1, \ldots, j+m\}$,
while on the complementary set $ D_j^c [\ell, m]$ the selections all agree.
Thus, by the crudest possible bound, we have
$$
| \sum_{i=j}^{\tau'_{N(j)+1 }} ( \eta_i - \eta'_i ) | \leq ( L(j) + M(j)  ) \1 \left( D_j[ L(j),M(j)] \right),
$$
and when we square both sides and rearrange, we obtain
\begin{align}
\bigg( \sum_{i=j}^{\tau'_{N(j)+1 }} ( \eta_i - \eta'_i ) \bigg)^2
    & \leq ( L(j) + M(j)  ) ^2 \1\left( D_j[ L(j),M(j)] \right) \nonumber \\
    & = \sum_{m=1}^{n-2-j} \sum_{\ell = 0}^j (\ell + m)^2 \1\left(D_j[ \ell,m] \right) \1(L(j) = \ell) \1(M(j) = m). \label{eq:squared-difference2}
\end{align}
For each $1 \leq j \leq n-2$, we now set
$$
R_j[\ell, m] = \left\{ \omega: X_i(\omega) < 5/6 \text{ for all } i \in \{ j-\ell +1, \ldots, j+m \} \right\},
$$
so, $R_j[\ell, m]$ is the event that no renewal takes place in $[j - \ell + 1, j]$ or in $[j+1, j+m]$.
By the definition of $L(j)$ and $M(j)$, we then have
$$
\1(L(j) = \ell) = \1 \left( R_j[\ell, 0]\right) \1\left(X_{j-\ell}\geq 5/6 \text{ or } \ell = j\right),
\quad \quad \text{ for } 0 \leq \ell \leq j,
$$
and
$$
\1(M(j) = m) \leq \1 \left( R_j[0, m-1]\right), \quad \quad \text{ for } 1 \leq m \leq n-2-j.
$$
Thus, if we define $\1 \left( R_j[ 0, 0]\right) \equiv 1$, then  we have the composite bound
\begin{equation}\label{eq:IndicatorBound}
\1(L(j) = \ell) \1(M(j) = m)  \leq \1 \left( R_j[\ell, m-1]\right)\1\left(X_{j-\ell}\geq 5/6 \text{ or } \ell = j\right),
\end{equation}
so by inserting \eqref{eq:IndicatorBound} in \eqref{eq:squared-difference2} and recalling \eqref{eq:difference-MDS-second-moment}, we find
\begin{equation}\label{eq:bound-MDS-squared}
\E\left[ d_j^2 \right] \! \leq \!\!\!
\sum_{m=1}^{n-2-j}  \sum_{\ell = 0}^j (\ell + m)^2
\E \left[ \1\!\left( D_j[\ell,m] \right) \1\!\left( R_j[ \ell, m \!-\! 1] \right) \1\!\left(X_{j-\ell}\geq 5/6 \text{ or } \ell \! = \! j\right)\right].
\end{equation}
The expected value on the right-hand side of \eqref{eq:bound-MDS-squared}
accounts for the probability that policies $\pi^*_n$ and $\pi_\infty$ differ when one renewal
has occurred at time $j-\ell$, and no renewal will occur until time $j+m$.
For this to happen, we need at least one $i \in \{j-\ell+1, \ldots, j+m\}$
such that $X_i \in [\xi_{n-i+1}, \xi]$.
Since the $X_i$'s are uniformly distributed on $[0,1]$, the probability that $X_i \in [\xi_{n-i+1}, \xi]$
equals $\xi - \xi_{n-i+1}$ and,
by the monotonicity of the minimal fixed points in Lemma \ref{lm:xi_k},
we have the upper bound $\xi - \xi_{n-i+1} \leq \xi - \xi_{n-(j+m)+1}$ for all $i \in \{ j-\ell+1, \ldots, j+m\}$.
Then, we can estimate the right-hand side of \eqref{eq:bound-MDS-squared} with Boole's inequality,
and obtain that there is a constant $C$ such that
\begin{align*}
&\E \left[ \1\!\left( D_j[\ell,m] \right) \1\!\left( R_j[ \ell, m \!-\! 1] \right)
\1\!\left(X_{j-\ell}\geq 5/6 \text{ or } \ell \! = \! j\right)\right] \\
&\quad \quad \leq  C (m - \ell) \! \left(\xi - \xi_{n - (j + m) + 1} \right)\! (1-p)^{\ell + m - 1}.
\end{align*}
At this point, $C=6/5$ would suffice, but subsequently $C$ denotes a Hardy-style constant that may change from line to line.
If we use this last bound in \eqref{eq:bound-MDS-squared}, we obtain
$$
\E\left[ d_j^2 \right] \leq  C \sum_{m=1}^{n-2 - j} \sum_{\ell = 0}^j (\ell + m )^3 (\xi - \xi_{n-(j+m)+1}) (1-p)^{\ell + m - 1},
$$
so, if we change variable by applying the transformation $r = j+m$, we have
$$
\E\left[ d_j^2 \right] \leq C \sum_{r=j+1}^{n-2 } \sum_{\ell = 0}^j (\ell + r - j )^3 (\xi - \xi_{n-r+1}) (1-p)^{\ell + r - j  - 1}.
$$
If we now sum over $1 \leq j \leq n-2$, we obtain
$$
\E\left[ \Delta_n ^2 \right] = \sum_{j=1}^{n-2} \E\left[ d_j^2 \right]
\leq C \sum_{j=1}^{n-2} \sum_{r=j+1}^{n-2 } \sum_{\ell = 0}^j (\ell + r - j )^3 (\xi - \xi_{n-r+1}) (1-p)^{\ell + r - j  - 1},
$$
so if we interchange the first with the second sum and rearrange, we have
$$
\E\left[ \Delta_n ^2 \right] \leq C\sum_{r=2}^{n-2}  (\xi - \xi_{n-r+1}) \big\{  \sum_{j=1}^{r-1}  \sum_{\ell = 0}^j (\ell + r - j)^3 (1-p)^{\ell + r - j - 1} \big\}.
$$
At this point, it is elementary to check that for all $r$ the last double sum is bounded by
the constant $\sum_{u=1}^\infty u^4 (1-p)^{u-1}$,
and this completes the proof of our lemma.
\end{proof}

\section{Some Perspective} \label{se:conclusions}

We have pursued the proof of a specific central limit theorem, but some aspects of our analysis may have useful implications
for a wider class of Markov decision problems (MDPs).
For example, we took advantage here of the existence of a
policy $\pi_\infty$ that could be viewed heuristically as the ``optimal policy at infinity,'' and the temporal homogeneity of
this policy then gave us access to the machinery of Markov additive processes. Many MDPs offer similar prospects.

To be sure, specialized efforts were needed to relate the finite horizon policy $\pi_n^*$ to the limiting policy, but
the pattern used here does offer some general guidance. In almost any MDP, the Bellman equation gives one good prospects for computing
the value function, but to extract the full value of those functions one needs to
develop a deeper understanding of their geometry --- and the geometry of the associated threshold functions.
Here, the development of such an understanding would have been stymied without the guidance provided by Figure \ref{fig:thresholds}.
If one views our analysis as a case
study, then one message is that when facing a new
MDP one would almost always be wise to begin with the best numerical work that the problem allows.

Finally, the Bellman equation grants a natural place for induction in the analysis of many MDPs, and here we have
seen that such inductions can be greatly helped by various forms of diminishing returns.
Without the special properties represented by \eqref{eq:diminishing-return} and \eqref{eq:DiminishingReturns2}
our inductions could not have moved forward.
One can anticipate that some aspect of this experience will be present in the analysis of many other MDPs.

\section*{Acknowledgement}

The authors are pleased to thank Alexandre Belloni, the editor, and the referees for useful comments on an earlier draft of this paper.

\bibliographystyle{agsm}

\end{document}